\title[Submanifolds of symmetric spaces]{The flavour of intermediate Ricci and homotopy when studying submanifolds of symmetric spaces}
\def\titl{The flavour of intermediate Ricci and homotopy when studying submanifolds of symmetric spaces}
\def\auth{Manuel Amann \& Peter Quast \& Masoumeh Zarei}
\subjclass[2010]{53C35, 53C40, 53C20, 57T20 (Primary), 53C42 (Secondary)}
\keywords{\noindent symmetric space, submanifold, k-positive Ricci curvature, homotopy groups, Cartan type, totally geodesic}
\thanks{}
\author{\auth}
\newtheorem{theo}{Theorem}[section]
\newtheorem{main}{Theorem}
\newtheorem{maincor}[main]{Corollary}
\newtheorem*{main*}{Theorem}
\newtheorem*{mainprop*}{Proposition}
\newtheorem{mainconj}{Conjecture}
\newtheorem{prop}[theo]{Proposition}
\newtheorem{defi2}[theo]{Definition}
\newtheorem*{defi2*}{Definition}
\newenvironment{defi*}{\begin{defi2*}\normalfont}{\end{defi2*}}
\newenvironment{defin*}[1]{\begin{defi2*}[#1]\normalfont}{\end{defi2*}}
\newtheorem*{rem2*}{Remark}
\newenvironment{rem*}{\begin{rem2*}\normalfont}{\hfill$\boxbox$\end{rem2*}}
\newtheorem{rem2}[theo]{Remark}
\newenvironment{rem}{\begin{rem2}\normalfont}{\hfill$\boxbox$\end{rem2}}
\newtheorem{lemma}[theo]{Lemma}
\newtheorem{cor}[theo]{Corollary}
\newtheorem*{cor*}{Corollary}
\newtheorem*{conj*}{Conjecture}
\newtheorem*{theo*}{Theorem}
\newtheorem*{ques*}{Question}
\newtheorem*{mi2}{Main Idea}
\newtheorem{ex2}[theo]{Example}
\newtheorem{exer2}[theo]{Exercise}
\newtheorem{alg2}[theo]{Algorithm}
\newcommand{\cc}{{\mathbb{C}}} 
\newcommand{\hh}{{\mathbb{H}}} 
\newcommand{\qq}{{\mathbb{Q}}} 
\newcommand{\rr}{{\mathbb{R}}} 
\newcommand{\pp}{{\mathbf{P}}} 
\newcommand{\Gr}{{\mathbf{Gr}}} 
\newcommand{\V}{{\mathbf{V}}} 
\newcommand{\s}{{\mathbb{S}}} 
\newcommand{\zz}{{\mathbb{Z}}} 
\newcommand{\Spe}{{\mathbf{S}}} 
\newcommand{\SO}{{\mathbf{SO}}} 
\newcommand{\U}{{\mathbf{U}}} 
\newcommand{\SU}{{\mathbf{SU}}} 
\newcommand{\Sp}{{\mathbf{Sp}}} 
\newcommand{\B}{{\mathbf{B}}} 
\newcommand{\C}{{\mathbf{C}}} 
\newcommand{\E}{{\mathbf{E}}} 
\newcommand{\F}{{\mathbf{F}}} 
\newcommand{\G}{{\mathbf{G}}} 
\newcommand{\Spin}{{\mathbf{Spin}}} 
\newcommand{\dif} {{\operatorname{d}}} 
\newcommand{\In} {{\,\subseteq\,}} 
\newcommand{\codim}{{\operatorname{codim\,}}} 
\newcommand{\id}{{\operatorname{id}}} 
\newcommand{\rk}{{\operatorname{rk\,}}} 
\newcommand{\Ric}{{\operatorname{Ric}}} 
\newcommand{\shape}{{\operatorname{S}}} 
\newcommand{\foc}{{\operatorname{foc}}} 
\newcommand{\co}{\colon\thinspace} 
\newcommand{\comment}[1]{} 
\newcommand{\hto}[1]{\overset{#1}{\hookrightarrow}} 
\newcommand{\step}[1]{\textbf{Step #1.}} 
\newcommand{\ack}{\noindent\textbf{Acknowledgements. }} 
\newcommand{\str}{\noindent\textbf{Structure of the article. }} 
\newcommand{\mytableextraspace}{\addlinespace[.4em]} 
\newcommand{\Codim}{{\mathrm{codim}}}
\newcommand{\lieA}{{\mathfrak{a}}}
\newcommand{\lieG}{{\mathfrak{g}}}
\newcommand{\lieK}{{\mathfrak{k}}}
\newcommand{\lieP}{{\mathfrak{p}}}
\newcommand{\lieO}{{\mathfrak o}}
\newcommand{\lieSU}{{\mathfrak{su}}}
\newcommand{\lieU}{{\mathfrak u}}
\newcommand{\lieSp}{{\mathfrak{sp}}}
\newcommand{\Root}{{\mathcal{R}}}
\newcommand{\trace}{{\mathrm{trace}}}
\newcommand{\lieE}{{\mathfrak{e}}}
\newcommand{\lieF}{{\mathfrak{f}}}
\newcommand{\Ad}{{\mathrm{Ad}}}
\def\N{{\mathbb N}}
\def\R{{\mathbb R}}
\def\C{{\mathbb C}}
\begin{document}

\maketitle \thispagestyle{empty}


\begin{abstract}
We introduce a new technique to the study and identification of submanifolds of simply-connected symmetric spaces of compact type based upon an approach computing $k$-positive Ricci curvature of the ambient manifolds and using this information in order to determine how highly connected the embeddings are.

This provides codimension ranges in which the Cartan type of submanifolds satisfying certain conditions which generalize being totally geodesic necessarily equals the one of the ambient manifold. Using results by Guijarro--Wilhelm our approach partly generalizes recent work by Berndt--Olmos on the index conjecture.
\end{abstract}


\section*{Introduction}

Symmetric spaces are amongst the most beautiful structures visible in Riemannian geometry. They have undergone decades of study, and their structure theory which was established on this quest has incorporated many influences ranging from Riemannian geometry to pure algebra. The outcome is as fascinating as it is elegant.

Over the years trying to understand their properties even better one focus was lying on understanding their submanifolds. It seems however, that any information of this kind is hard to come by. There has been remarkable insight as for totally geodesic submanifolds in rank at most 3, respectively, in general, as finished recently by Berndt--Olmos in the \emph{index}, the lowest codimension of a totally geodesic submanifold. Information beyond the case of totally geodesic submanifolds either seems elusive, or, it appears that the number of submanifolds satisfying interesting geometric properties is more than abundant (as already is the case for minimal submanifolds). Indeed, the study of totally geodesic submanifolds relies heavily on the understanding of Lie triple systems, and any further technique seems highly desirable.

In the spirit of combining Riemannian geometry, topology and algebra in the area of symmetric spaces we draw on the ``generalized connectedness lemma'' by Guijarro--Wilhelm, inspired by the following proposition, in order to suggest such a new approach to the study of certain classes of submanifolds of symmetric spaces genuinely containing totally geodesic ones.

\begin{prop}\label{P:Ric_k}
Let $P$ be a symmetric space of compact type.
Then there exists a positive integer $k_P$ such that for all integers $k$ with $\dim P> k\geq k_P$ it holds that $\Ric_{k}>0$.
Moreover, if $\rk P=1$, then $k_P=1$, and if $\rk P\geq 2$ and $P$ is
additionally irreducible and simply-connected, then $k_P$ is given in
Tables \ref{TAB: type I short} and \ref{TAB: exceptional}.
\end{prop}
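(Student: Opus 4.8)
The plan is to isolate the Lie-theoretic content, reduce $\Ric_k>0$ to a statement about centralizers in the isotropy representation, and then read off $k_P$ from the restricted root data.

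Fix $o\in P$ and a Cartan decomposition $\lieG=\lieK\oplus\lieP$ with $T_oP\cong\lieP$. Two standard facts about compact type symmetric spaces do the geometric work: the sectional curvature is nonnegative, and for orthonormal $X,Y\in\lieP$ one has $\sec(X,Y)=0$ if and only if $[X,Y]=0$ (indeed $\sec(X,Y)=\|[X,Y]\|^{2}$ up to a positive constant, with $[X,Y]\in\lieK$). By homogeneity it suffices to test $\Ric_k>0$ at $o$, and since a sum of nonnegative sectional curvatures is $0$ only when every term is, $\Ric_k$ fails to be positive at $o$ exactly when there is a unit $v\in\lieP$ together with $k$ orthonormal vectors in $v^{\perp}\cap\lieP$ all commuting with $v$; that is, exactly when $\dim\mathfrak{z}_{\lieP}(v)\ge k+1$ for some $v\neq0$, where $\mathfrak{z}_{\lieP}(v)=\{X\in\lieP:[X,v]=0\}$. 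Hence the optimal constant is
\[
k_P=\max_{0\neq v\in\lieP}\dim\mathfrak{z}_{\lieP}(v),
\]
and then $\Ric_k>0$ for all $k$ with $k_P\le k<\dim P$, the monotonicity $\Ric_k>0\Rightarrow\Ric_{k+1}>0$ being immediate from $\sec\ge0$. Moreover $k_P\le\dim P-1$: if $\mathfrak{z}_{\lieP}(v)=\lieP$, the Jacobi identity forces $v$ to be central in $\lieG$, hence $v=0$ since $\lieG$ is semisimple. This already settles the first assertion for every compact type $P$, reducible ones included.

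To evaluate $k_P$ I would use the restricted root space decomposition. After conjugating $v$ by the isotropy group we may assume $v\in\mathfrak{a}$, a maximal abelian subspace of $\lieP$, and then $\mathfrak{z}_{\lieP}(v)=\mathfrak{a}\oplus\bigoplus_{\lambda\in\Sigma^{+},\,\lambda(v)=0}\lieP_{\lambda}$, where $\Sigma$ is the restricted root system, $r=\rk P=\dim\mathfrak{a}$, and $m_{\lambda}=\dim\lieP_{\lambda}$ the multiplicities. For $v$ in the closed Weyl chamber the roots vanishing on $v$ are precisely those supported on the set $J$ of simple roots vanishing on $v$, i.e.\ they form the sub-root system $\Sigma_J$ generated by $\{\alpha_i:i\in J\}$; and $v\neq0$ forces $J\neq\{1,\dots,r\}$. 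Since $\Sigma_J$ increases with $J$, the maximum is attained when $J$ is the complement of a single node, so
\[
k_P=\dim P-\min_{1\le i\le r}c_i,\qquad c_i:=\sum_{\lambda\in\Sigma^{+},\,n_i(\lambda)\neq0}m_{\lambda},
\]
with $n_i(\lambda)$ the coefficient of the $i$-th simple root in $\lambda$. In particular, when $\rk P=1$ the space $\mathfrak{a}$ is a line, no restricted root vanishes on a nonzero vector, $\mathfrak{z}_{\lieP}(v)=\mathfrak{a}$, and $k_P=1$ --- the rank-one case of positive sectional curvature.

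It then remains to evaluate $\min_i c_i$ over the irreducible symmetric spaces of compact type of rank at least two. For Cartan type II (the compact simple Lie groups) $\Sigma$ is the root system of $\lieG$ with all multiplicities equal to $2$, so this reduces to finding the node whose deletion leaves the largest Levi subalgebra; for type I one substitutes the standard restricted root systems and multiplicities, taking care of the non-reduced $BC_r$ cases, where $\lambda$ and $2\lambda$ are both roots and deleting a node removes both. In each case the minimum is realized at an appropriate end node of the Dynkin diagram --- the short- or long-root end, according to the multiplicity pattern --- and produces the entries of Tables~\ref{TAB: type I short} and~\ref{TAB: exceptional}. I expect the genuine work, and essentially the only source of error, to be precisely this: organizing the classical families uniformly, keeping the $B$, $C$, $D$ and especially $BC$ multiplicities straight, and verifying in each case that deleting one correctly chosen node beats every other admissible $J$. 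There is no isolated hard step here, only a sizeable finite check dictated by the classification.
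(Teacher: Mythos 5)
Your proposal is correct and follows essentially the same route as the paper: positivity of $\Ric_k$ is reduced via $\sec(v,w)=\big\|[v,w]\big\|^2$ and nonnegative curvature to the condition $k\geq \max_{0\neq v\in\lieP}\dim C_{\lieP}(v)$, this maximum is then computed over dual vectors of simple roots using the restricted root decomposition (your $k_P=\dim P-\min_i c_i$ is exactly the paper's $k_P=r+\max_j\sum_{\alpha\in\Root^0_j}m_\alpha$), and the rank-one case gives $k_P=1$. The only part you leave as a ``finite check''---the case-by-case evaluation over the root types $A$, $B$, $C$, $D$, $E$, $F_4$, $G_2$ and the non-reduced $BC_r$ with their multiplicities, which is what actually produces the entries of Tables~\ref{TAB: type I short} and~\ref{TAB: exceptional}---is precisely the content of the paper's Subsection~\ref{sec_cases}.
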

For the case of reducible symmetric spaces we point the reader to Proposition \ref{propprod}.

Theorem~\ref{Main_submanifolds_shape_Operator} presents the details of this approach, but before stating the theorem, let us explain two points.

First, we need to recall the ``Cartan type''. We say that two classical irreducible symmetric spaces of compact type
have the \emph{same Cartan type} if they have a common \emph{Cartan symbol} (CS)
as given in Table \ref{TAB: Cartan type}. For further details see Section \ref{secsym}.

Second, Theorem~\ref{Main_submanifolds_shape_Operator} works with different notions of equivalence. In order to avoid a repetition and trying to merge these notions into one statement,
  we first fix a notion of equivalence before stating the theorem. Accordingly, we state that ``Q is isomorphic to a symmetric space of compact type'' if it is either homotopy equivalent, homeomorphic, diffeomorphic, or isometric (by abuse of notation, of course, the latter notion does incorporate the possibility of applying different scaling factors on the different de Rham factors of a symmetric space) to a symmetric space of compact type. Having chosen such a notion of equivalence, this determines the term ``isomorphic'' throughout the theorem.

Note further that in the assertion of the theorem $\foc_{Q}$ is the focal radius of $Q$, $S_v$ is the shape operator of $Q$ corresponding to a unit vector $v$ normal to $Q$ (see Subsection~\ref{SS_Intermediate} for more details) and the quantities $k_{P}$ and $C_{P}$ are the respective ones from Table~\ref{TAB: type I short} for higher rank spaces and respectively $1$ and $(n-9)/2$ for the rank one spaces.

\begin{main}\label{Main_submanifolds_shape_Operator}
Let $Q$ be a compact connected embedded proper submanifold of an irreducible simply-connected compact classical symmetric space $P$ with $\Ric_{k_{P}}\geq \delta$, for some $\delta>0$. Assume further that $Q$ is isomorphic to a symmetric space of compact type and satisfies the following condition:
There exists some $r\in [0, \pi/2)$ with
$$\foc_{Q}>r$$
  such that for every $x\in Q$, every unit normal vector $v\in T_{x}Q^{\perp}$, and any $k_{P}$-dimensional subspace $W$ of $T_{x}Q$ we have that
$$\lvert\trace(\shape_{v}\mid_{W})\rvert\leq \sqrt[\leftroot{-2}\uproot{2}]{\frac{\delta}{k_{P}}}k_{P}\cot\bigg(\pi/2- \sqrt[\leftroot{-2}\uproot{2}]{\frac{\delta}{k_{P}}}r\bigg),$$
Then if $\Codim~Q\leq C_{P}$,
one of the following cases occurs:
\begin{itemize}
\item[1.]
If $P$ is isometric to a sphere, then $Q$ is isomorphic to a product of spheres whose dimensions are at least $10$.
\item[2.] If $P\cong \tfrac{\SO(2+q)}{\SO(2)\times \SO(q)}$, $q\geq 10 $, then $Q$ is isomorphic to
 a symmetric space $ \tfrac{\SO(2+q')}{\SO(2)\times \SO(q')}$, $q'<q$, possibly up to products with spheres of dimensions at least $10$, or $Q$ is isomorphic to a complex projective space $\mathbb{CP}^{n}$, $n\geq 5$, possibly up to products with spheres of dimensions at least $10$.

 Similarly, if $P^{n}$ is isometric to a complex projective space with $n\geq 11$, then $Q$ is isomorphic to a complex projective space $\mathbb{CP}^{r}$, $r\geq 5$, possibly up to products with spheres of dimensions at least $10$, or $Q$ is isomorphic to a Grassmannian manifold $ \tfrac{\SO(2+q)}{\SO(2)\times \SO(q)}$ with $q\geq 10$, possibly up to products with spheres of dimensions at least $10$.
 \item[3.] If $P$ is isometric to a Grassmannian manifold (other than those appearing in Items 1 and 2), then
  $Q$ is isomorphic to a symmetric space with the same Cartan type as $P$, possibly up to products with spheres of dimensions at least $10$.
 \item[4.] If $P$ is none of the above symmetric spaces, then $Q$ is isomorphic to a necessarily reducible symmetric space of the form $Q_{1}\times S^{l_{1}}\times \ldots \times S^{l_{r}}$, where $Q_{1}$ has the same Cartan type as $P$ and $l_{i}\geq 10$, for $1\leq i\leq r$.
\end{itemize}

\end{main}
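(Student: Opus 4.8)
The plan is to combine three ingredients: the intermediate Ricci curvature estimate for $P$ provided by Proposition~\ref{P:Ric_k}, the ``generalized connectedness lemma'' of Guijarro--Wilhelm (which converts a lower bound on $\Ric_{k_P}$ together with a bound on the shape operator and a focal radius condition into a statement that the inclusion $Q\hookrightarrow P$ is highly connected, say $(\dim Q - k_P + 1)$-connected or similar), and finally the homotopy classification of simply-connected compact classical symmetric spaces via their low-degree homotopy groups and cohomology. First I would verify that the precise numerical hypothesis on $\lvert\trace(\shape_v\mid_W)\rvert$ together with $\foc_Q>r$ are exactly the quantities feeding into the Guijarro--Wilhelm lemma after rescaling the metric so that $\Ric_{k_P}\geq \delta$ becomes the normalized bound; the cotangent expression is precisely what appears there once one tracks the comparison geometry constant $\sqrt{\delta/k_P}$. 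This yields that $Q\hookrightarrow P$ induces isomorphisms on $\pi_i$ for $i< C_P$-ish (more precisely for $i \le \dim Q - \dim P + C_P'$ for the appropriate $C_P'$, which is why $C_P = (n-9)/2$ type bounds appear) and a surjection one degree higher.

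Next, having a highly-connected embedding, I would transfer the homotopy/cohomology information of $P$ in low degrees to $Q$. Since $Q$ is assumed isomorphic (in the chosen category) to a simply-connected symmetric space of compact type, it decomposes — up to the chosen equivalence — as a product of irreducible factors and, crucially, the connectedness range forces its first few homotopy groups to match those of $P$. The core of the argument is then a case-by-case inspection: for each irreducible classical $P$ one reads off $\pi_2, \pi_3, \pi_4, \dots$ up to the guaranteed range, and checks which products of irreducible simply-connected compact symmetric spaces can reproduce that pattern. Spheres of dimension $\le 9$ are excluded precisely because their low homotopy would disturb the match within the available range (hence the recurring ``dimensions at least $10$''), while spheres of dimension $\ge 10$ are invisible in the relevant range and so may always be split off. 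The Cartan-type rigidity in Items 3 and 4 comes from the fact that, among classical irreducible symmetric spaces, the Cartan symbol (encoding the relevant characteristic classes / the structure of rational cohomology in low degrees, as recorded in Table~\ref{TAB: Cartan type}) is determined by this finite homotopy data once the codimension, hence the connectedness range, is large enough relative to $k_P$; Items 1 and 2 are the genuine low-rank coincidences (sphere $=$ a rank-one space; $\mathbb{CP}^n$ and the oriented $2$-plane Grassmannian sharing enough low-degree homotopy, namely $\pi_2=\zz$) that the range cannot separate, so both possibilities must be listed.

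Concretely I would organize the proof as follows. Step 1: rescale and apply Proposition~\ref{P:Ric_k} and the Guijarro--Wilhelm lemma to get the connectedness conclusion, with the explicit range in terms of $k_P$ and $\Codim Q \le C_P$. Step 2: using that $Q$ is isomorphic to a symmetric space of compact type, write $Q \simeq Q_1 \times S^{l_1}\times\cdots\times S^{l_r}$ with $Q_1$ having no sphere factors, and observe that $\dim Q \ge \dim P - C_P$ keeps the connectedness range open to at least the degrees needed below. Step 3: compare $\pi_*(Q)$ with $\pi_*(P)$ in that range; deduce $l_i \ge 10$ and that $Q_1$ must have the same Cartan symbol as $P$, invoking the classification tables and the fact that the Cartan symbol of a classical irreducible space is pinned down by its low homotopy groups within the range afforded by $C_P$. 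Step 4: isolate the exceptional coincidences — $P$ a sphere (Item 1), $P$ the complex projective space or the $\SO(2+q)/(\SO(2)\times\SO(q))$ Grassmannian, which are not separated by $\pi_2$ and the next available groups (Item 2) — and otherwise conclude Item 3 or the reducible statement of Item 4. The main obstacle I anticipate is Step 3: one must show that the \emph{finite} amount of homotopy/cohomology data guaranteed by the codimension bound $C_P$ is genuinely enough to pin the Cartan symbol, which requires a careful, table-driven argument ruling out all ``impostor'' symmetric spaces sharing $P$'s low-degree invariants — and in exactly the cases where it is \emph{not} enough, honestly enumerating the alternatives, which is what produces the slightly awkward disjunctions in Items 1 and 2.
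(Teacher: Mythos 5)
Your proposal follows essentially the same route as the paper's proof: rescale the metric by $\delta/k_P$ so that $\Ric_{k_P}\geq k_P$, feed the focal-radius and trace hypotheses into the Guijarro--Wilhelm connectedness theorem to get that the inclusion is $\sharp_P(Q)$-connected with $C_P$ chosen precisely so that $\sharp_P(Q)\geq 10$, and then compare homotopy groups up to degree $9$ against the tables, splitting off spheres of dimension at least $10$ and recording the $\mathbb{CP}^n$ versus $\SO(2+q)/(\SO(2)\times\SO(q))$ blind spot. The only details the paper makes explicit that you leave implicit are the verification that the rescaled radius $\sqrt{\delta/k_P}\,r$ still lies in $[0,\pi/2)$ (which uses the focal-radius bound of Lemma~\ref{L:Focal_Radius}, itself relying on $\codim Q\leq C_P$ forcing $\dim Q\geq k_P$) and the dimension count showing that in Item 4 the space $Q$ is necessarily reducible.
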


\begin{rem}\label{R:Valid_dim}
Note that if $\dim P$ is not ``large'' enough, then $C_{P}< 1$. Thus there is no (proper) submanifold $Q$ of $P$ with $\codim~Q\leq C_{P}<1$. For example, for a compact rank one symmetric space $P$, if $\dim P\leq 10$, then $C_{P}<1$.
For this reason, we say that $\dim P$ is a \emph{valid} dimension if $C_{P}\geq 1$. For instance the valid dimension of a compact rank one symmetric space is $11$.

In Theorem~\ref{Main_submanifolds_shape_Operator}, we implicitly assume that $\dim~P$ is a valid dimension.
\end{rem}

Our approach for proving Theorem \ref{Main_submanifolds_shape_Operator}
comprises three major steps.
\begin{enumerate}
\item First we present a uniform method of how to compute the smallest $k$ for which the symmetric space $P$ has $k$-positive Ricci curvature. This will follow from a detailed analysis of associated root systems and isotropy orbits.
\item Next, we use this information as one key ingredient in the generalized connectedness lemma by Guijarro--Wilhelm. As an outcome in the respective cases we gain control on the degree of connectedness of the embedding, and we arrange codimensions in such a way that the map is $10$-connected.
\item In particular, this implies that the first $9$ homotopy groups of the ambient manifold and the submanifold have to coincide. We hence collect and compute nearly all such homotopy groups for \emph{all} irreducible simply-connected symmetric spaces, and compare them with the ones of the ambient space. This lets us conclude that Cartan types of ``almost all'' cases
 necessarily agree in the codimension ranges we consider.
 \end{enumerate}

This procedure illustrates and varies nicely---in this form in the very first time---the traditionally fruitful interplay of Riemannian geometry (here in the form of $k$-positive Ricci curvature), topology (in the form of connectedness and homotopy), and algebra (as homotopy groups, and used to compute $k$-positive Ricci) on symmetric spaces. We remark that, in particular, Points (1) and (3) may be of independent interest. For example, as a corollary of these observations in Point (3) we can fix
\begin{maincor}\label{cor01}
The homotopy groups up to degree $9$ of a (simply-connected) symmetric space of compact type uniquely identify the Cartan types except for the blind spot of not differentiating between $\SO(2+q)/\SO(2)\SO(q)$ and $\cc\pp^{n}$ for $q$ and $n$ as in Theorem~\ref{Main_submanifolds_shape_Operator}.

\end{maincor}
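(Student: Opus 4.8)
The plan is to establish Corollary~\ref{cor01} directly from the bookkeeping carried out in Point (3) of the outline: namely, from the complete tabulation of the homotopy groups $\pi_1,\dots,\pi_9$ of all irreducible simply-connected symmetric spaces of compact type. Concretely, I would proceed in three steps. First, reduce the statement to irreducible spaces: a (simply-connected) symmetric space of compact type splits as a de Rham product of irreducible factors, and its homotopy groups in degrees $\le 9$ are the direct sums of those of the factors; since the Cartan type records (by definition, via Cartan symbols, see Table~\ref{TAB: Cartan type} and Section~\ref{secsym}) the unordered list of classical irreducible factors, it suffices to show that a classical irreducible simply-connected symmetric space is determined up to Cartan type by its list of homotopy groups through degree $9$ — with the single exception noted — and that no exotic coincidence can occur between products and irreducibles once the degrees are low enough. (Here I would invoke that within the stated codimension/dimension ranges the ``large'' spheres appearing as extra factors in Theorem~\ref{Main_submanifolds_shape_Operator} have trivial homotopy below degree $10$, so they are invisible to the invariant and must be accounted for separately; but for the pure identification statement of the corollary, spheres of dimension $\ge 10$ contribute nothing in degrees $\le 9$ and so cannot create new ambiguities.)

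Second, carry out the comparison itself. Running through the classical families $A$, $B$, $C$, $D$ (i.e. the Grassmannians $\SU(p+q)/\mathrm S(\U(p)\times\U(q))$, $\SO(p+q)/\SO(p)\SO(q)$, $\Sp(p+q)/\Sp(p)\Sp(q)$, the groups $\SU(n)$, $\SO(n)$, $\Sp(n)$ viewed as symmetric spaces, and the remaining classical types $AI$, $AII$, $CI$, $DIII$, etc.), I would read off from the assembled tables the values of $\pi_k$, $k=1,\dots,9$, and observe that each Cartan symbol is pinned down by some combination of: the first nonvanishing homotopy group and its degree; the presence of $2$-torsion (distinguishing orthogonal-type from unitary/symplectic-type spaces); and the specific finite groups appearing in degrees $4$ through $9$ (which encode the rank parameter well enough in the classical range). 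The only collision that survives this sieve is the pair $\SO(2+q)/\SO(2)\SO(q)$ and $\cc\pp^n$: both are rank-two-looking Hermitian symmetric spaces (for the Grassmannian) versus rank-one (for $\cc\pp^n$) whose low-degree homotopy groups agree on the nose for the parameter ranges in Theorem~\ref{Main_submanifolds_shape_Operator} — this is precisely the ``blind spot'' of the corollary, and I would simply exhibit the matching columns of the two rows of the table as the proof that the exception is genuine and unavoidable.

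Third, handle the exceptional symmetric spaces and the ``classical vs.\ exceptional'' separation. Since the corollary is stated for all simply-connected symmetric spaces of compact type (not just classical ones), I would note that the exceptional irreducible symmetric spaces have homotopy groups in degrees $\le 9$ that are disjoint in pattern from the classical ones (e.g.\ by the location of torsion or the first nonzero group), so no classical space is confused with an exceptional one, and the exceptional Cartan types are likewise mutually distinguished in this range; this is again a finite table check using the data gathered for Point (3).

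The main obstacle is not conceptual but one of completeness and verification: one must be certain that the homotopy computations in degrees $\le 9$ are correct and exhaustive for \emph{every} classical family and rank, including the small-rank sporadic isomorphisms (such as $\SO(6)\cong\SU(4)$-type coincidences, $D_2=A_1\times A_1$, $B_2=C_2$, and the low-dimensional Grassmannians that are spheres or projective spaces) which can either create or dissolve apparent ambiguities. In other words, the delicate point is to show that the stated blind spot is the \emph{only} one — ruling out all other coincidences requires the full table, and any gap there (a missing rank case, an unnoticed low-dimensional exceptional isomorphism, or an error in a torsion group) would either invalidate the corollary or force an additional exception. I would therefore organize the proof as an explicit case analysis over the rows of Tables~\ref{TAB: type I short} and~\ref{TAB: exceptional} together with the homotopy tables assembled in Point (3), with the classical small-rank coincidences treated as their own short list.
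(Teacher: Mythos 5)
Your proposal is correct and takes essentially the same route as the paper: its proof of Corollary~\ref{cor01} is precisely the finite table comparison carried out in Step~2 of the proof of Theorem~\ref{Main_submanifolds_shape_Operator}, using additivity of homotopy groups over de Rham factors, the fact that spheres of dimension at least $10$ are the only irreducible factors with trivial $\pi_{\leq 9}$, and the observation that $\SO(2+q)/\SO(2)\SO(q)$ versus $\cc\pp^{n}$ is the unique collision among the rows of the tables in Section~\ref{sec05}. Your concerns about low-rank coincidences and the exceptional spaces are handled in the paper by Table~\ref{mastertable_SI}, Table~\ref{mastertable_E}, and the computer-assisted linear-equation check described after the proof of Theorem~\ref{Main_submanifolds_shape_Operator}.
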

We remark that this improves a similar result for Lie groups (see \cite[Theorem 1]{Boe98}) by restricting to degrees at most $9$ and moreover generalizes the result to the much larger regimen of symmetric spaces.
In particular, note again, that the only information we draw from knowing the entire homotopy type in the theorem are just the first $9$ homotopy groups. Similar statements using rational homotopy groups, or combinations of rational and $\zz_2$-homotopy groups only do not hold.

\bigskip

As already announced this new generalized approach applying to more general submanifolds even has previously unknown consequences for totally geodesic submanifolds.

We denote by $\Gr(p,n)$ the simply-connected Grassmannian of oriented real $p$-planes in $\rr^n$ (here by a slight abuse of notation), or complex $p$-planes in $\cc^n$, or quaternionic ones in $\hh^n$, respectively. Furthermore, $C_P$ denotes the corresponding number from Table \ref{TAB: type I short}.

\begin{main}\label{Main_T_g_Range}
Let $P=\Gr(p,n)$ be as above
with  $3\leq p<n/2$, and let $Q$ be a complete totally geodesic embedded submanifold of $P$ which satisfies $\operatorname{ind} P\leq \codim Q\leq C_P$, where $\operatorname{ind} P$ is the index of $P$. Then $Q$ has the same Cartan type as $P$.
\end{main}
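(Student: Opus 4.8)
The plan is to deduce Theorem~\ref{Main_T_g_Range} from Theorem~\ref{Main_submanifolds_shape_Operator} by checking that a complete totally geodesic submanifold automatically satisfies all the hypotheses of the latter, and then eliminating the ``unwanted'' spherical factors. First I would observe that a totally geodesic $Q$ has $\shape_v\equiv 0$ for every normal vector $v$, so $\trace(\shape_v\mid_W)=0$ for every $k_P$-dimensional $W$; hence the shape-operator inequality is trivially satisfied for \emph{any} choice of $r\in[0,\pi/2)$, provided only that $\foc_Q>r$. Since $Q$ is itself a (complete, totally geodesic, hence) symmetric space of compact type with $\foc_Q>0$, one may pick such an $r$; thus $Q$ is isomorphic (indeed isometric) to a symmetric space of compact type and the curvature normalisation $\Ric_{k_P}\geq\delta$ for the fixed Grassmannian $P$ is available from Proposition~\ref{P:Ric_k}. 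Therefore, as soon as $\operatorname{ind}P\le\codim Q\le C_P$, Theorem~\ref{Main_submanifolds_shape_Operator} applies.

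Next I would feed the hypothesis $3\le p<n/2$ into Theorem~\ref{Main_submanifolds_shape_Operator}. Because $p\ge 3$ and $p<n/2$, the Grassmannian $P=\Gr(p,n)$ is not a sphere, not an $\SO(2+q)/\SO(2)\SO(q)$, not a complex projective space, and not of the exceptional ``blind-spot'' shape appearing in Items~1 and~2; it falls squarely into Item~3. Consequently Theorem~\ref{Main_submanifolds_shape_Operator} already yields that $Q$ is isomorphic to a symmetric space with the same Cartan type as $P$, \emph{possibly up to products with spheres of dimensions at least $10$}. So the conclusion ``$Q$ has the same Cartan type as $P$'' will follow once we rule out genuine spherical factors, i.e.\ once we show $Q$ must actually be irreducible (of Grassmannian Cartan type) rather than a product $Q_1\times S^{l_1}\times\dots\times S^{l_r}$ with $r\ge 1$.

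The main obstacle is therefore exactly this last reducibility step, and here is where totally geodesicity (rather than the weaker shape-operator bound) is essential. The plan is to use the codimension bound together with the connectedness estimate: with $\codim Q\le C_P$ the inclusion $Q\hookrightarrow P$ is (at least) $10$-connected, so $\pi_i(Q)\cong\pi_i(P)$ for $i\le 9$; in particular $\pi_2(Q)\cong\pi_2(P)$. For $P=\Gr(p,n)$ in the complex and quaternionic cases $\pi_2(P)\cong\zz$ (real oriented case: $\pi_2(P)=0$ but $\pi_3$, $\pi_4$ distinguish), while a product with a sphere factor $S^{l}$, $l\ge 10$, contributes nothing to $\pi_i$ for $i\le 9$ and hence cannot be detected homotopically in that range — so a homotopy argument alone will not kill the sphere factors. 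Instead I would invoke the lower bound $\operatorname{ind}P\le\codim Q$: a reducible totally geodesic submanifold $Q_1\times S^{l_1}\times\dots\times S^{l_r}$ of a Grassmannian corresponds to a reducible Lie triple system, and any such proper totally geodesic submanifold has codimension strictly larger than $C_P$ unless $r=0$ — more precisely, one checks (case by case over the three families of Grassmannians, using the classification of totally geodesic submanifolds and the explicit value of $C_P$ from Table~\ref{TAB: type I short} versus the known index $\operatorname{ind}P$) that a spherical de Rham factor forces $\codim Q>C_P$, contradicting the hypothesis. Hence $r=0$ and $Q=Q_1$ has the same Cartan type as $P$. The delicate point, and the one requiring genuine work, is this final dimension count: one must verify, uniformly in $p$ and $n$ with $3\le p<n/2$, that the presence of even one $S^{l}$ ($l\ge 10$) in a totally geodesic $Q$ pushes the codimension beyond the window $[\operatorname{ind}P,\,C_P]$, and I would organise this by comparing $\dim Q=\dim Q_1+\sum l_i$ against $\dim P$ using that $Q_1$ already carries the full Cartan type of $P$ and therefore is not too small.
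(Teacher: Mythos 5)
Your first step is exactly the paper's: totally geodesic gives $\shape_v\equiv 0$, compactness gives a positive focal radius, completeness and total geodesy make $Q$ itself a symmetric space of compact type, so Theorem~\ref{Main_submanifolds_shape_Operator} applies and, since $3\leq p<n/2$ puts $P$ in Item~3, you get that $Q$ has the Cartan type of $P$ up to sphere factors of dimension at least $10$. The genuine gap is in the step you yourself flag as ``requiring genuine work'': eliminating the sphere factors. Your plan is to quote ``the classification of totally geodesic submanifolds'' of the three families of Grassmannians and run a case-by-case dimension count, but no such classification is available in the range the theorem covers --- for rank $p\geq 3$ (and certainly $p\geq 4$) totally geodesic submanifolds of Grassmannians are not classified; avoiding exactly this is the point of the paper's method. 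Moreover, the dimension count you sketch does not close on its own: knowing that $Q_1$ shares the Cartan type of $P$ does not bound $\dim Q_1$ from below ($Q_1$ could be a much smaller Grassmannian of the same type), so ``$\dim Q_1+\sum l_i$ versus $\dim P$'' cannot by itself exclude a product $Q_1\times S^{l}$ lying within codimension $C_P$; one needs structural control on \emph{how} such a product can sit totally geodesically inside $P$. Also, the lower bound $\operatorname{ind}P\leq\codim Q$ is not the engine of this step (it only guarantees that $Q$ is proper), contrary to the role you assign it.

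The paper supplies the missing structural input via Chen--Nagano polar--meridian theory, which you do not invoke. If $Q=Q_1\times S^{l_1}\times\cdots\times S^{l_t}$ with $t\geq 1$, then $(x,Q)$ is a polar--meridian pair of $Q$ (\cite[Corollary~6.6]{ChenNagano}), and Lemma~\ref{L_Inclusion_Pairs} places $Q$ inside a meridian $P_-$ of $P$. By the known list of polars and meridians of Grassmannians, $P_-$ is a product of two smaller Grassmannians, e.g.\ $\Gr_{\mathbb C}(a,n-2b)\times\Gr_{\mathbb C}(b,2b)$ with $a+b=p$, $a<p$; since $P_-$ is itself complete totally geodesic with $\codim P_-\leq\codim Q\leq C_P$, Theorem~\ref{Main_submanifolds_shape_Operator} applies to $P_-$ as well and forces $a=0$, $b=p$, whose codimension $2pq-2p^2$ exceeds $C_P$ --- a contradiction; the quaternionic case is parallel, and the real oriented case needs the extra argument that $Q$ has two poles while $P$ has only one, so $Q$ lands in a proper meridian rather than in the trivial pair $(z,P)$. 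None of this machinery (nor a workable substitute) appears in your proposal, so as written the key half of the proof is missing. As a minor aside, your parenthetical claim that $\pi_2$ of the oriented real Grassmannian vanishes is off: for $p\geq 3$ it is $\zz_2$, though this does not affect your argument.
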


\begin{rem}
It is interesting to note that restricting to the category of totally geodesic submanifolds in most cases our approach allows us to reprove and to actually \emph{even improve} the classical estimate
\begin{align*}
\operatorname{ind} P \geq \rk P
\end{align*}
stating that the codimension of a totally geodesic submanifold of $P$ is at least as large as its rank. This was originally proved in \cite{BerndtOlomos_index} and was the starting point for a sequence of case by case considerations which finally led to a confirmation of the index conjecture (see \cite{BO_Conj}).

Unfortunately, a confirmation and reproof of the entire index conjecture a priori does not lie within the scope of our methods without further refinement or finetuned arguments tailored to the respective Cartan types---even in those situations in which our outcomes are essentially better than the rank estimate above. We can make a simple example to illustrate that. In the Lie group $\Sp(n+1)$ we have the (totally geodesic) Lie subgroup $\Sp(n)\times \Sp(1)$ which realises the index. However, its inclusion is just $3$-connected. This illustrates that the index (and the codimensions close to it) may be realized by submanifolds whose topology does not have strong enough ties to the one of the ambient space in order to control their Cartan type.
\end{rem}

\smallskip

\str In Section \ref{sec01} we recall the necessary ingredients from $k$-positive Ricci curvature and from symmetric spaces. Section \ref{sec02}
is devoted to computing the quantity $k_P$ in Proposition \ref{P:Ric_k}. There we set up a scheme in which these computations are obtained, and provide tables of the concrete outcome.
Section \ref{sec05} collects tables of all the homotopy groups of symmetric spaces up to degree $9$, and gives either precise references for them or shows how to compute them explicitly. All this finally culminates in the proofs of the main theorems in Section \ref{S:Proof of main Theorems}.

\smallskip

\ack The authors would like to thank Luis Guijarro and Fred Wilhelm for explaining to them some subtleties of the generalized connectedness lemma and to thank Makiko Sumi Tanaka for a clarification on the polar--meridian pairs. The authors are also grateful to J\"urgen Berndt for commenting on a previous version of the article.
We thank
Miguel Domínguez-Vázquez, David Gonz\'alez-\'Alvaro and Lawrence Mouill\'e
 for pointing us to some typos, to a flaw in Formula \eqref{EQ: Max dim G2}
and for making us aware of the references \cite{Lee} and \cite{Liu}.

The first named author was supported both by a Heisenberg grant and his research grant AM 342/4-1 of the German Research Foundation.
The third named author was supported by the DFG grant AM 342/4-1; the first and third named authors are members of the DFG Priority Programme 2026.

\section{Preliminaries}\label{sec01}

\subsection{Intermediate Ricci curvature}\label{SS_Intermediate}

A Riemannian manifold has \emph{$k$-th intermediate Ricci curvature} greater than or equal to $l$, denoted by $\Ric_{k}\geq l$, if for any orthonormal $(k+1)$-frame $\{v,w_1,\ldots, w_k\}$ the sum of sectional curvatures satisfies
\begin{align*}
\sum_{i=1}^k \sec(v,w_i)\geq l.
\end{align*}
Note that it is sufficient to assume $v,w_1,\ldots, w_k$ only to be orthogonal here; normalization reconciles them with the context of Ricci curvature.

Let $N$ be a smoothly embedded submanifold of a Riemannian manifold $M$, and denote by $S_{v} \colon T_{p}N\to T_{p}N $ the shape operator of $N$ corresponding to a unit vector $v$
normal to $N$. For brevity we write $S_{v}\mid_{W}$ for the composition of $S_{v}$ restricted to some subspace $W $of $T_{p}N$ with orthogonal projection $T_{p}N \to W$. We also denote by $\foc_{N}$ the focal radius of N. Then

\begin{theo}[\protect{\cite[Theorem B]{WG19}}]\label{T_Connectivity_Principle}
Let $M$ be a simply-connected, complete Riemannian $n$–manifold with $\Ric_{k}\geq k$, and let $N \subseteq M$ be a compact, connected, embedded, $l$–dimensional submanifold. If for some $r\in [0, \pi/2)$,
$$\foc_{N}>r$$
 and for every $x\in N$, $v\in T_{x}N^{\perp}$, and any $k$-dimensional subspace $W$ of $T_{x}N$,
 $$\lvert\trace(\shape_{v}\mid_{W})\rvert\leq k\cot(\pi/2-r),$$
then the inclusion $N\hto {} M$ is $(2l-k-n+2)$-connected.
\end{theo}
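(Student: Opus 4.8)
The plan is to run the path-space Morse theory behind the classical connectedness principles (Frankel; Fang--Mendon\c{c}a--Rong; Wilhelm), with the sectional curvature lower bound traded for $\Ric_{k}\ge k$ and the boundary terms of the relevant index form absorbed by the focal-radius and trace hypotheses. First I would reduce the topological statement to a Morse-index estimate. Let $\Omega$ be the Hilbert manifold of $H^{1}$-paths $[0,1]\to M$ that begin and end on $N$. Evaluation at an endpoint exhibits a fibration $\operatorname{hofib}(N\hookrightarrow M)\to\Omega\to N$ in which the inclusion of the constant paths is a section; since $M$ is simply connected the homotopy fibre is connected, and it follows that $N\hookrightarrow M$ is $c$-connected if and only if $N\hookrightarrow\Omega$ (the constant paths) is $c$-connected. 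I would then apply Morse--Bott theory to the energy $E\colon\Omega\to\R$: the critical points are geodesics meeting $N$ orthogonally at both ends; the constant paths form the minimal critical submanifold, a copy of $N$ of index $0$ (its Hessian is $V\mapsto\int|\nabla_{t}V|^{2}$, positive semidefinite with kernel exactly $TN$, so $N$ is nondegenerate); and $\Omega$ is homotopy equivalent to $N$ with handles of dimension at least $m$ attached, $m$ being the minimal Morse index of a non-constant critical geodesic. Thus it suffices to bound that index below by $2l-k-n+2$ (up to the standard indexing shift).

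For the index bound, fix a non-constant critical geodesic $\gamma\colon[0,\ell]\to M$ with $\gamma(0),\gamma(\ell)\in N$ and $\dot\gamma$ normal to $N$ at both ends. The index form on fields $V$ with $V(0)\in T_{\gamma(0)}N$, $V(\ell)\in T_{\gamma(\ell)}N$ is
$$I(V,V)=\int_{0}^{\ell}\Bigl(|\nabla_{t}V|^{2}-\langle R(V,\dot\gamma)\dot\gamma,V\rangle\Bigr)\,dt-\langle S_{\dot\gamma(0)}V(0),V(0)\rangle-\langle S_{-\dot\gamma(\ell)}V(\ell),V(\ell)\rangle,$$
and $\operatorname{index}(\gamma)$ is the top dimension of a subspace on which $I$ is negative definite. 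Let $\mathcal P$ be the space of parallel normal fields $W$ along $\gamma$ with $W(0)\in T_{\gamma(0)}N$ and $W(\ell)\in T_{\gamma(\ell)}N$; each endpoint condition cuts an $l$-plane out of the $(n-1)$-dimensional space of parallel normal fields, so $\dim\mathcal P\ge 2l-n+1$. I would use the test fields $f\cdot W$, $W\in\mathcal P$, for the symmetric profile $f(t)=\cos(t-\ell/2)/\cos(\ell/2)$ (valid for $\ell<\pi$; longer geodesics have even larger index, via Dirichlet profiles), which satisfies $f''+f=0$, $f(0)=f(\ell)=1$, and $\int_{0}^{\ell}(f'^{2}-f^{2})\,dt=[ff']_{0}^{\ell}=-2\tan(\ell/2)$. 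For any orthonormal $k$-tuple $W_{1},\dots,W_{k}\in\mathcal P$, since $\Ric_{k}\ge k$ forces $\sum_{i}\langle R(W_{i},\dot\gamma)\dot\gamma,W_{i}\rangle\ge k$ pointwise while the hypothesis gives $\bigl|\sum_{i}\langle S_{v}W_{i},W_{i}\rangle\bigr|=|\trace(S_{v}\mid_{W})|\le k\cot(\pi/2-r)=k\tan r$ at each endpoint, one gets
$$\sum_{i=1}^{k}I(fW_{i},fW_{i})\le k\int_{0}^{\ell}(f'^{2}-f^{2})\,dt+2k\tan r=2k\bigl(\tan r-\tan(\ell/2)\bigr),$$
which is negative once $\ell>2r$. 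In other words the quadratic form $W\mapsto I(fW,fW)$ on $\mathcal P$ has negative trace on every $k$-dimensional subspace; by the elementary fact that a symmetric form whose restriction to each $d'$-plane of a $d$-dimensional space has negative trace admits a negative-definite subspace of dimension $d-d'+1$, it follows that $\operatorname{index}(\gamma)\ge(2l-n+1)-k+1=2l-k-n+2$. It remains to see that every non-constant critical geodesic indeed has $\ell>2r$: the focal-radius hypothesis $\foc_{N}>r$ implies that the halves $\gamma|_{[0,\ell/2]}$ and $\gamma|_{[\ell/2,\ell]}$ are distance-realising radial geodesics of $N$, and $\gamma$ being a smooth geodesic (no corner at $\ell/2$) then forces $\ell/2>r$.

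The heart of the matter, and the main obstacle, is making the geometric input of the focal radius precise: the step ``$\foc_{N}>r\Rightarrow\ell>2r$'' has to be justified with care (it is really an estimate on the return time of geodesics normal to $N$ inside the tube on which the normal exponential map is a diffeomorphism, and one must know that this radius, not merely the abstract focal distance, exceeds $r$), and one must check that the regime $\ell\ge\pi$ and any borderline cases are covered. The curvature side is comparatively clean once the trace trick is in place, the only subtlety being that $\Ric_{k}\ge k$ and the trace hypothesis control only $k$-fold sums, which is precisely why the dimension drops from $2l-n+1$ to $2l-k-n+2$. A secondary obstacle is the rigour of the infinite-dimensional Morse--Bott theory on $\Omega$ (Palais--Smale for $E$, cleanness of the minimal critical set, handle attachments over possibly degenerate critical submanifolds, the exact relation between handle dimensions and the connectivity claimed) together with the bookkeeping of the $\pm1$.
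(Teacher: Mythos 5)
You should first note that the paper does not prove this statement at all: it is quoted verbatim as \cite[Theorem B]{WG19} (Guijarro--Wilhelm's ``softer connectivity principle''), so there is no internal proof to compare with; what you have written is an attempt at reproving the cited result. Much of your machinery is the standard and correct skeleton for such connectivity principles: Morse theory on the path space of $N$-to-$N$ paths, the index form with shape-operator boundary terms, the dimension count $\dim\mathcal P\ge 2l-n+1$ for parallel fields satisfying both endpoint conditions, the $k$-trace trick pairing $\Ric_k\ge k$ with the hypothesis $\lvert\trace(\shape_v\mid_W)\rvert\le k\tan r$, and the linear-algebra fact producing a negative-definite subspace of dimension $\dim\mathcal P-k+1$. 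These steps are sound.

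The genuine gap is exactly the step you flag but do not close: ``$\foc_N>r$ implies every nonconstant critical geodesic has length $\ell>2r$.'' Your justification (the two halves are distance-realising and smoothness at the midpoint forces $\ell/2>r$) conflates the focal radius with the normal injectivity (tube) radius. Within the focal radius the normal exponential map is only an immersion, not injective; cut points of $N$ can occur strictly before focal points, and the halves of a short orthogonal chord need not be distance-minimising, nor does minimality of both halves yield a contradiction, since the two feet are distinct (this is precisely the Klingenberg phenomenon: the tube radius is the minimum of $\foc_N$ and half the length of the shortest orthogonal chord, and the hypothesis controls only the first quantity). Without $\ell>2r$ your test-field computation gives nothing: for $\ell\le 2r$ the quantity $2k(\tan r-\tan(\ell/2))$ is nonnegative, and in fact no choice of profile $f$ can produce negativity from the given data, since for short $\ell$ the Robin-type functional $\int_0^\ell(f'^2-f^2)+\tan r\,(f(0)^2+f(\ell)^2)$ is positive definite. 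So handling chords of length $\le 2r$ --- either by showing the full hypothesis package excludes them, or by a different index estimate along such chords that uses $\foc_N>r$ through $N$-Jacobi fields/Riccati comparison rather than through a length bound --- is the actual content of the ``softer'' principle, and it is missing. Secondary, smaller issues: the borderline case $\ell=\pi$ (where the Dirichlet profile only gives $\le 0$) is not covered, and the Morse--Bott bookkeeping over a possibly degenerate critical set, including the exact index-versus-connectivity shift, is only asserted.
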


From Theorem~\ref{T_Connectivity_Principle} we have the following corollary.

\begin{cor}
\label{C:Connectedness_Principle}
Let $M$ be a simply-connected complete closed Riemannian $n$-manifold with $\Ric_k M > 0$ and let $N\In M$ be a compact connected $l$-dimensional totally geodesic submanifold. Then the inclusion $N\hto{} M$ is $(2l-n-k+2)$-connected.

\end{cor}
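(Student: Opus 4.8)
The plan is to deduce the corollary directly from Theorem~\ref{T_Connectivity_Principle}, after normalizing the curvature bound and checking that the remaining hypotheses of that theorem are automatic for a totally geodesic submanifold. The only genuine inputs are compactness of $M$ (to upgrade $\Ric_k M>0$ to a uniform lower bound) and the vanishing of the second fundamental form of $N$.

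First I would normalize the curvature bound. Since $M$ is closed, the space of orthonormal $(k+1)$-frames tangent to $M$ is a closed subset of the $(k+1)$-fold fibre product of the (compact) unit tangent bundle of $M$, hence compact, so the continuous function sending a frame $(v,w_1,\dots,w_k)$ to $\sum_{i=1}^{k}\sec(v,w_i)$ attains its minimum, which by $\Ric_k M>0$ is some $\delta>0$; thus $\Ric_k\geq\delta$. Replacing the metric $g$ by $(\delta/k)\,g$ multiplies every sectional curvature by $k/\delta$, so with respect to the rescaled metric $\Ric_k\geq k$. Rescaling changes neither the simple connectivity, completeness, nor compactness of $M$, nor the property that $N$ is a compact connected $l$-dimensional totally geodesic submanifold, nor the connectivity of the inclusion $N\hto{}M$, which is a homotopy invariant; hence I may assume $\Ric_k\geq k$.

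Second I would dispatch the shape-operator and focal-radius conditions. Because $N$ is totally geodesic its second fundamental form vanishes, so $S_v\equiv 0$ for every unit normal $v$, and in particular $\lvert\trace(S_v\mid_W)\rvert=0$ for every $x\in N$, every $v\in T_xN^{\perp}$, and every subspace $W\In T_xN$. Moreover $\foc_N>0$: along a unit-speed geodesic normal to $N$ the relevant $N$-Jacobi fields satisfy a linear ODE whose only curvature term is the ambient one (the shape-operator term drops out), so their first zero is bounded below by a positive constant depending only on an upper sectional curvature bound for $M$, which exists since $M$ is compact; equivalently, a compact totally geodesic submanifold of a complete manifold of bounded curvature has positive focal radius.

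Finally I would apply Theorem~\ref{T_Connectivity_Principle} to the rescaled metric with any $r\in[0,\foc_N)$: the hypothesis $\foc_N>r$ holds by the choice of $r$, and $\lvert\trace(S_v\mid_W)\rvert=0\leq k\cot(\pi/2-r)$ holds because the right-hand side is non-negative on $[0,\pi/2)$---indeed one may simply take $r=0$, where it equals $0$. The theorem then gives that $N\hto{}M$ is $(2l-n-k+2)$-connected, as claimed. I do not expect a real obstacle here: the whole argument is a verification of the hypotheses of Theorem~\ref{T_Connectivity_Principle}, and the only point deserving an explicit remark is the positivity of $\foc_N$, which as noted is automatic for compact $M$.
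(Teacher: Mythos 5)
Your argument is correct and follows essentially the same route as the paper's own justification (given as a remark after the corollary): rescale the metric using the uniform positive lower bound on $\Ric_k$ furnished by compactness so that $\Ric_k\geq k$, note the trace condition is vacuous since $N$ is totally geodesic, observe $\foc_N>0$ by compactness, and take $r=0$ in Theorem~\ref{T_Connectivity_Principle}. Your Jacobi-field justification of the positive focal radius merely spells out what the paper asserts via compactness, so there is no substantive difference.
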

\begin{rem}
Corollary~\ref{C:Connectedness_Principle} follows from Theorem~\ref{T_Connectivity_Principle} by observing that
\begin{itemize}
\item Up to rescaling the condition $\Ric_kM\geq k$ can be guaranteed: by compactness there is a uniform positive lower bound on $\Ric_k M$, say $\delta$. If we rescale the metric by $\delta/k$, we get that $\Ric_kM\geq k$ with respect to this rescaled metric.
\item Compactness implies the existence of a uniform positive lower
bound on the focal radius; whence we can assume that $r=0$.
\item The trace estimate for the second fundamental form is trivial due to the totally geodesic embedding, which is preserved under rescaling.
\end{itemize}
\end{rem}

\begin{rem}
As we have seen, the conditions in Corollary \ref{C:Connectedness_Principle} are satisfied for totally geodesic submanifolds. They are, however, not necessarily satisfied for \emph{minimal} submanifolds, since vanishing of mean curvature does not imply the vanishing of its restrictions to all subspaces $W$. Indeed, there are various examples of minimal codimension $1$ spheres in compact symmetric spaces (see for example \cite{Hsiang1988}).
\end{rem}

\begin{prop}[\protect{\cite[Theorem 1.1]{GWFocal}}]\label{P:Focal_Radius}
Let $M$ be a complete Riemannian $n$-manifold with $\Ric_{k}\geq k$ and $N$ be any
submanifold of $M$ with $\dim(N)\geq k$. Then $\foc_{N}\leq \pi/2$.
\end{prop}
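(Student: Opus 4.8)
This is a focal-distance analogue of the Bonnet--Myers theorem, and the plan is a second-variation (index-form) argument for the normal exponential map of $N$, with a test field chosen precisely so that the hypothesis $\Ric_k\ge k$ enters through the curvature term. I would argue by contradiction, assuming $\foc_N>\pi/2$.

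First I would fix the geometry at a single point. Pick $p\in N$; since $\dim N\ge k$, choose a $k$-dimensional subspace $W\subseteq T_pN$ with orthonormal basis $e_1,\dots,e_k$, and pick a unit normal vector $v\in T_pN^\perp$ (such a $v$ exists provided $N$ has positive codimension, which is implicit in the statement). As $S_{-v}=-S_v$, after possibly replacing $v$ by $-v$ I may assume $\sum_{i=1}^k\langle S_v e_i,e_i\rangle\ge 0$. Completeness of $M$ makes $\gamma(t)=\exp_p(tv)$ defined on $[0,\pi/2]$, and $\pi/2<\foc_N$ means that $N$ has no focal points along $\gamma|_{(0,\pi/2]}$; by the focal-point version of the Morse index theorem the index form
$$I(X,X)=\int_0^{\pi/2}\!\Big(|X'|^2-\langle R(X,\gamma')\gamma',X\rangle\Big)\,dt-\langle S_v X(0),X(0)\rangle$$
is then positive definite on the space of piecewise-smooth fields $X$ along $\gamma$ with $X(0)\in T_pN$ and $X(\pi/2)=0$.

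Next I would test $I$ on the fields $X_i(t)=\cos(t)\,E_i(t)$, where $E_i$ is the parallel transport of $e_i$ along $\gamma$. Since $v\perp W$ and parallel transport is isometric, $\{\gamma'(t),E_1(t),\dots,E_k(t)\}$ is an orthonormal $(k+1)$-frame for every $t$, so $\Ric_k\ge k$ yields $\sum_{i=1}^k\sec(\gamma'(t),E_i(t))\ge k$. Using $|X_i'|^2=\sin^2 t$, $\langle R(X_i,\gamma')\gamma',X_i\rangle=\cos^2 t\,\sec(\gamma',E_i)$, $X_i(0)=e_i$, and the elementary identity $\int_0^{\pi/2}(\sin^2 t-\cos^2 t)\,dt=0$, a short computation gives
$$\sum_{i=1}^k I(X_i,X_i)\ \le\ k\!\int_0^{\pi/2}\!(\sin^2 t-\cos^2 t)\,dt-\sum_{i=1}^k\langle S_v e_i,e_i\rangle\ =\ -\sum_{i=1}^k\langle S_v e_i,e_i\rangle\ \le\ 0 .$$
Each $X_i$ is a nonzero admissible field, so this contradicts the positive-definiteness of $I$, and the contradiction forces $\foc_N\le\pi/2$.

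I do not expect a serious obstacle: once one observes that $\cos(t)E_i(t)$ is exactly the field making the two integrals cancel, the computation is forced and routine. The step most easily gotten wrong, and the one I would state most carefully, is the invocation of the focal-point Morse index theorem in the sharp form ``no focal points of $N$ on $(0,\pi/2]$ implies $I$ is positive definite'' --- this is also where completeness of $M$ is essential, since it guarantees that $\gamma$ extends all the way to $t=\pi/2$. The remaining fine points are the sign convention in the shape-operator boundary term (harmlessly absorbed by the substitution $v\mapsto-v$) and the standing assumption that $N$ has positive codimension so that unit normal vectors exist at all.
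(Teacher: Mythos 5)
Your argument is correct. Note, however, that the paper does not prove this Proposition at all: it is quoted verbatim from Guijarro--Wilhelm \cite[Theorem 1.1]{GWFocal}, so there is no internal proof to match. Your second-variation argument is the classical Bonnet--Myers-type route: the test fields $X_i(t)=\cos(t)E_i(t)$ are admissible (they satisfy $X_i(0)=e_i\in T_pN$, $X_i(\pi/2)=0$), the frame $\{\gamma'(t),E_1(t),\dots,E_k(t)\}$ stays orthonormal under parallel transport so that $\Ric_k\geq k$ applies pointwise along $\gamma$, the integrals of $\sin^2$ and $\cos^2$ cancel, and the boundary term is disposed of by the harmless replacement $v\mapsto -v$; together with the index lemma for focal points (positive definiteness of $I$ when $N$ has no focal point on $(0,\pi/2]$, which is where completeness enters) this forces the contradiction, and your caveat that $N$ must have positive codimension so that a unit normal exists is the same implicit assumption made in the source. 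By contrast, Guijarro--Wilhelm's own proof runs through their comparison theory for the transverse (projected) Jacobi/Riccati equation under intermediate Ricci bounds, which is heavier machinery but also yields the rigidity statement in the case of equality $\foc_N=\pi/2$; your more elementary argument buys a short, self-contained proof of exactly the inequality that this paper actually uses.
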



\subsection{Symmetric Spaces}\label{secsym}
 We first recall and introduce the notions from the theory of symmetric spaces
needed in this paper.
Further details can be found in the standard literature e.g.\ \cite{He} and \cite{L-II}.\par

Let $G$ be a semi-simple compact connected Lie group.
A pair $(G,K)$ is called a \emph{symmetric pair} if $K$ is an open and closed
subgroup of the fixed point set $G^\sigma$ of an involutive automorphism $\sigma$ of $G.$
A symmetric pair $(G,K)$ induces
a splitting
$$\lieG=\lieK\oplus \lieP$$
of the Lie algebra $\lieG$ of $G,$
where $\lieP$ is the $(-1)$-eigenspace of the differential $\sigma_*$ of
$\sigma$ at the identity. The Lie algebra $\lieK$ of $K$ is
the set of fixed points of $\sigma_*.$\par
A symmetric pair $(G,K)$
is \emph{associated with} a connected Riemannian symmetric space $P$ of compact type if
$G$ acts transitively and by
isometries on $P$ such that the kernel of this action is finite, and such that $K$ is
the isotropy group of $G$ at a base point $o\in P$. In this case we also say that the pair
$(\lieG,\lieK)$ consisting of the Lie algebras of $G$ and $K$ is \emph{associated with} $P$.

\bigskip

The compact Lie groups come in certain series---like $\SU(n)$, $\Spin(n)$, $\Sp(n)$, etc. As observed by Cartan, so do the compact symmetric spaces, i.e.~they fall into few different types, the so-called \emph{Cartan types}. Already in the introduction we specified that two symmetric spaces share a Cartan type if they have a common Cartan symbol. In Table \ref{TAB: Cartan type} we present the list of such symbols.
\begin{table}[ht]\centering
\caption{Cartan symbols of classical irreducible symmetric spaces of compact type
described by infinitesimal symmetric pairs $(\lieG,\lieK)$, see \cite[Chap.\ X, \S 6]{He}}
\label{TAB: Cartan type}
\begin{tabular}{ccccccccc}
CS&$\lieG$ & $\lieK$ & Conditions \\
\midrule
$\mathsf{A}$ & $\lieSU_{n}\times\lieSU_{n}$ & $\Delta\lieSU_{n} $& $n\geq 2$\\
$\mathsf{A\, I}$& $\lieSU_n$ & $\lieO_n$ & $n\geq 2$ \\
$\mathsf{A\, II}$ & $\lieSU_{2n}$ & $\lieSp_n$ & $n\geq 2$ \\
$\mathsf{A\, III}$
& $\lieSU_{p+q}$ & $\mathfrak{s}(\lieU_p\oplus\lieU_q)$ & $1\leq p\leq q$ \\
$\mathsf{BD}$&$\lieO_{n}\times \lieO_{n}$& $\Delta\lieO_n$ & $n\geq 3,\; n\neq 4$ \\
$\mathsf{BD\, I}$
 & $\lieO_{p+q}$ & $\lieO_p\oplus \lieO_q$ & $1\leq p\leq q,\; (p,q)\notin \{(1,1), \, (2,2)\}$ \\
$\mathsf{C}$& $\lieSp_n\times \lieSp_n$& $\Delta\lieSp_n$ &$n\geq 1$ \\
$\mathsf{C\, I}$& $\lieSp_{n}$ & $\lieU_n$ & $n\geq 1$\\
$\mathsf{C\, II}$& $\lieSp_{p+q}$ & $\lieSp_p\oplus\lieSp_q$ & $1\leq p\leq q$ \\
$\mathsf{D\, III}$& $\lieO_{4n}$ & $\lieU_{2n}$ & $n\geq 2$\\
\end{tabular}
\end{table}
Note that some low dimensional symmetric
spaces may have several Cartan symbols due to isomorphisms which are explained in \cite[Chap.\ X, \S 6]{He} (see Table~\ref{mastertable_SI} for a list of such isomorphisms). Observe further that all compact simple rank-one Lie groups have the same Cartan type.

\bigskip

The natural projection
$$\pi: G\to P,\qquad g\mapsto g.o$$
induces an identification of the quotient $G/K$ with $P$
such that the involution $\sigma,$ which descends to $G/K,$
becomes the geodesic symmetry of $P.$
The differential $\pi_*$ of $\pi$ at the identity induces a linear isomorphism
$$\pi_*|_{\lieP}:\lieP\to T_oP.$$
Using this identification the Riemannian metric on $T_oP$ is induced by a suitable bi-invariant
metric $\langle\, ,\, \rangle$ on $\lieG$ restricted to $\lieP$, and the sectional curvature of a 2-plane
 spanned by two perpendicular unit vectors $v,w\in \lieP$ is given by
\begin{equation}
\label{EQ: Sec}
 \sec(v,w)=\big\langle [v,w],[v,w]\big\rangle=\big\|[v,w]\big\|^2.
\end{equation}

The rank $r$ of a symmetric space $P=G/K$ is the dimension of any maximal abelian subspace of
$\lieP.$ We now fix a maximal abelian subspace $\lieA$ of $\lieP$ and denote by
$\lieA^*$ its dual vector space. For each $\alpha\in\lieA^*$ we set
$$\lieG_\alpha:=\{Y\in\lieG\otimes \C\; |\; [H,Y]=\sqrt{-1}\alpha(H)Y\; \text{for all}\;
H\in\lieA\}.$$
Then the root system of $P$ is
$$\Root:=\{\alpha\in\lieA^*\; |\; \alpha\neq 0\; \text{and}\; \lieG_{\alpha}\neq\{0\}\}.$$
One can now choose a basis
$$\Sigma=\{\alpha_1,\dots,\alpha_r\}\subset\Root$$
of $\lieA^*,$ called system of \emph{simple roots}, which enjoys the property that
any root $\alpha\in\Root$ is a linear combination of the elements of $\Sigma$ with
either only non-positive or only non-negative integer coefficients.
Let $$\{\xi_1,\dots,\xi_r\}\subset\lieA$$ be the dual basis of $\Sigma,$ that is
$\alpha_j(\xi_k)=0$ if $j\neq k$ and $\alpha_j(\xi_j)=1.$
The elements of the set
$$\Root^+:=\left\{\alpha\in\Root\; |\; \forall j\in\{1,\dots, r\}: \;
\alpha(\xi_j)\geq 0\right\}$$ are called
\emph{positive roots} w.r.t.\ $\Sigma.$ Further, for each $\alpha\in \Root^+$,
$$
 \alpha=\sum\limits_{j=1}^rc^\alpha_j\alpha_j \qquad\text{with}\; c^\alpha_j =\alpha(\xi_j)\in\N\cup\{0\}\;
 \text{for all}\; j\in \{1,\dots ,r\}.
$$
We have the following direct sum decomposition:
\begin{equation}
 \label{EQ: direct sum deco}
  \lieP=\lieA\oplus\sum\limits_{\alpha\in\Root^+}\lieP_\alpha \quad \text{with}
 \quad \lieP_\alpha=(\lieG_\alpha\oplus\lieG_{-\alpha})\cap\lieP.
\end{equation}

For $\alpha\in\Root^+$ we call $m_\alpha:=\mathrm{dim}(\lieP_\alpha)$
the \emph{multiplicity} of $\alpha$.\par

\subsection{Polar-Meridian pairs} In this subsection we quickly review the notions of polars and meridians of compact symmetric spaces---introduced by Chen--Nagano---and recall some important facts about them which are relevant to our article. We refer the reader to \cite{ChenNagano, Nagano, NaganoII} for further information.

Let $P$ be a compact symmetric space, and $o$ be the base point.
Let $p\neq o$---as the midpoint of a closed
geodesic emanating from $o$---be a fixed point of $s_o$, the geodesic symmetry of $P$ at $o$.
The connected component of the fixed point set
of $s_o$ containing $p$ is called a \emph{polar} of $o$ at $p $ and is denoted by $P^{o}_{+}(p)$ or simply by $P_{+}(p)$ or $P_{+}$ if less/no precision is needed. If $P^{o}_{+}(p)=\{p\}$, then $p$ is called a \emph{pole} of $o$. We call the connected component of the fixed point set of $s_p\circ s_o$
containing $p$ the \emph{meridian} to $P^{o}_{+}(p)$ and denote it by $P^{o}_{-}(p)$ or simply by $P_{-}(p)$ or $P_{-}$. Clearly, both $P_{+}(p)$ and $P_{-}(p)$ are complete totally geodesic submanifolds of $P$, and hence they are compact symmetric spaces as well. Moreover, the tangent space of $P_{-}(p)$ at $p$ is the orthogonal complement of the tangent space $T_{p}P_{+}(p)$ in $T_{p}P$.

Let $p$ be an antipodal point of $o$. Then we have a quadruple \linebreak
 $ (o, p, P^{o}_{+}(p), P^{o}_{-}(p)) $ as above. Let $G_{P}$ denote the \emph{symmetry group}, i.e. the closure of the group of isometries generated by
$\{s_{q} \mid q\in M\}$ in the compact-open topology, where $s_{q}$ is the geodesic symmetry of $P$ at $q$. We say that two quadruples $(o, p, P^{o}_{+}(p), P^{o}_{-}(p))$ and $(o', p', P^{o'}_{+}(p'), P^{o'}_{-}(p'))$ are equivalent if there is an isometry $\varphi\in G_{P}$ which carries
$(o, p, P^{o}_{+}(p), P^{o}_{-}(p))$ to $(o', p', P^{o'}_{+}(p'), P^{o'}_{-}(p'))$ respectively.
We denote the set of all equivalence classes of such quadruples by $\mathcal{P}(P)$. Note that $\mathcal{P}(P)$ is a finite set and globally determines $P$ in the sense that two (compact and connected) symmetric spaces $P_{1}$ and $P_{2}$ are isomorphic if and only if $\mathcal{P}(P_{1})$ is isomorphic to $\mathcal{P}(P_{2})$ (see e.g.~\cite[Theorem~5.1]{ChenNagano}).

Let $Q$ be a complete totally geodesic submanifold of $P$, which is in turn a compact symmetric space. Now we briefly explain how the polar-meridian pairs of $P$ and those of $Q$ are related. Any isometric totally geodesic embedding $f\colon Q\to P $ gives rise to a mapping $\mathcal{P}(f)\colon \mathcal{P}(Q)\to \mathcal{P}(P)$
sending $(o, p, Q^{o}_{+}(p), Q^{o}_{-}(p)) $ into $(f(o), f(p), P^{f(o)}_{+}(f(p)), P^{f(o)}_{-}(f(p))$. Note that $\mathcal{P}(f)$ is well-defined, since every element $\varphi$ of $G_{Q}$ is the restriction of an element $\psi$ of $G_{P}$
such that we have $f\circ \varphi=\psi\circ f$. Moreover, we recall

From \cite[Theorem~2.9., and the paragraph preceding the theorem]{ChenNagano} we cite
\begin{lemma}\label{L_Inclusion_Pairs} 
\begin{itemize}
\item[1.] $f(Q^{o}_{+}(p))\subseteq P^{f(o)}_{+}(f(p)),$ and $f(Q^{o}_{-}(p))\subseteq P^{f(o)}_{-}(f(p))$.
\item[2.] $f$ induces a pairwise totally geodesic immersion $\mathcal{P}(f)\colon \mathcal{P}(Q)\to \mathcal{P}(P) $.
\end{itemize}
\end{lemma}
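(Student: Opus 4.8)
The plan is to reduce both assertions to the single observation that an isometric totally geodesic embedding intertwines geodesic symmetries. First I would record the identity
$$s^{P}_{f(q)}\circ f = f\circ s^{Q}_{q}\qquad\text{for every }q\in Q,$$
where $s^{P}$ and $s^{Q}$ denote the geodesic symmetries of $P$ and of $Q$. This is immediate: since $f$ is totally geodesic and isometric it sends the $Q$-geodesic $t\mapsto\exp^{Q}_{q}(tv)$ to the $P$-geodesic $t\mapsto\exp^{P}_{f(q)}(t\,df_{q}v)$, and combining this with $s^{P}_{f(q)}(\exp^{P}_{f(q)}w)=\exp^{P}_{f(q)}(-w)$ and $s^{Q}_{q}(\exp^{Q}_{q}v)=\exp^{Q}_{q}(-v)$ shows the two maps agree on $\exp^{Q}_{q}(v)$; completeness of $Q$ makes every point of this form.

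For part 1, I would apply the identity with $q=o$. Since $f$ is injective, $f(p)\neq f(o)$, and by the identity $f(p)\in\mathrm{Fix}(s^{P}_{f(o)})$, so $f(p)$ is an antipodal point of $f(o)$ and the quadruple $(f(o),f(p),P^{f(o)}_{+}(f(p)),P^{f(o)}_{-}(f(p)))$ is defined. The identity also shows $f(\mathrm{Fix}(s^{Q}_{o}))\subseteq\mathrm{Fix}(s^{P}_{f(o)})$; as $Q^{o}_{+}(p)$ is connected, contains $p$, and lies in $\mathrm{Fix}(s^{Q}_{o})$, its image is a connected subset of $\mathrm{Fix}(s^{P}_{f(o)})$ containing $f(p)$, hence is contained in the component $P^{f(o)}_{+}(f(p))$. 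Composing the identity for $q=o$ and $q=p$ gives $(s^{P}_{f(p)}\circ s^{P}_{f(o)})\circ f=f\circ(s^{Q}_{p}\circ s^{Q}_{o})$, so $f$ maps $\mathrm{Fix}(s^{Q}_{p}\circ s^{Q}_{o})$ into $\mathrm{Fix}(s^{P}_{f(p)}\circ s^{P}_{f(o)})$, and the same connectedness argument yields $f(Q^{o}_{-}(p))\subseteq P^{f(o)}_{-}(f(p))$.

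For part 2, I would first invoke the fact recalled just before the lemma---every $\varphi\in G_{Q}$ is the restriction of some $\psi\in G_{P}$ with $f\circ\varphi=\psi\circ f$---which shows $\mathcal{P}(f)$ is well defined on equivalence classes, since it carries equivalent quadruples of $Q$ to equivalent quadruples of $P$. To see that the induced maps of polars and of meridians are totally geodesic, I would note that $Q^{o}_{+}(p)$ is totally geodesic in $Q$ and $Q$ is totally geodesic in $P$, so $f|_{Q^{o}_{+}(p)}$ is a totally geodesic isometric immersion of $Q^{o}_{+}(p)$ into $P$ whose image, by part 1, lies in the totally geodesic submanifold $P^{f(o)}_{+}(f(p))$. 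Since a submanifold that is totally geodesic in $P$ and contained in a totally geodesic submanifold $N\subseteq P$ is automatically totally geodesic in $N$ (a $P$-geodesic lying in $N$ is an $N$-geodesic, and conversely, because the second fundamental form of $N$ in $P$ vanishes), the image is totally geodesic in $P^{f(o)}_{+}(f(p))$; the identical argument applied to $s^{Q}_{p}\circ s^{Q}_{o}$ handles the meridians. Together these say precisely that $\mathcal{P}(f)$ is a pairwise totally geodesic immersion.

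The only genuinely delicate point is establishing the intertwining identity and checking that $f(p)$ produces an admissible antipodal configuration in $P$, so that the target polar--meridian pair exists at all; everything else is a formal consequence of connectedness and of the transitivity of being totally geodesic for nested submanifolds. I would also stress that one cannot expect more than an immersion of pairs here: $f(Q^{o}_{+}(p))$ need not be open in $P^{f(o)}_{+}(f(p))$, and a polar or meridian of $Q$ may have strictly smaller dimension than the corresponding one of $P$.
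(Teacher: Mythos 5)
Your argument is correct. Note, however, that the paper does not prove this lemma at all: it is quoted verbatim from Chen--Nagano (Theorem~2.9 and the paragraph preceding it), so there is no in-paper proof to compare against. What you supply is a self-contained proof along the standard lines of that reference: the intertwining identity $s^{P}_{f(q)}\circ f=f\circ s^{Q}_{q}$ (valid because a totally geodesic isometric embedding of a complete connected manifold carries $\exp^{Q}_{q}(v)$ to $\exp^{P}_{f(q)}(df_{q}v)$), followed by the connectedness argument locating $f(Q^{o}_{\pm}(p))$ inside the corresponding component of the relevant fixed-point set, and the transitivity of total geodesy for nested submanifolds to get the pairwise totally geodesic immersion; well-definedness on equivalence classes is exactly the fact the paper records just before the lemma, which you legitimately invoke. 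The one point worth stating explicitly is that the target quadruple is admissible in the paper's sense, i.e.\ that $f(p)$ is again the midpoint of a closed geodesic emanating from $f(o)$; this is immediate since $f$ maps the closed geodesic through $o$ with midpoint $p$ to a closed geodesic through $f(o)$ with midpoint $f(p)$, and your fixed-point observation together with injectivity of $f$ covers the rest. Your closing caveat that only an immersion of pairs, not an open inclusion, can be expected is also accurate.
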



\section{The determination of $k_P.$}\label{sec02}

The main objective of this section is to prove Proposition \ref{P:Ric_k}. We first show that for
every irreducible simply-connected compact symmetric space $P$ there exists an integer $k_{p}$
such that $\Ric_{k_{P}}>0$ and then compute $k_{p}$ for each such symmetric space $P$.

Our calculations are pretty straightforward. The method we adopt
is already oulined in \cite[pp.\ 110 -- 111]{Lee} in a similar context, but has then been abandoned in \cite{Lee} in
favor of (rather lengthy) elementary calculations which again were merely applied to some classical cases. A slightly different approach is used in \cite{Liu}, but the case of simple compact Lie groups seems to be still missing. After the first version of our paper had appeared on the arXiv in October 2020,
we were informed that Dom\'inguez-V\'azquez, Gonz\'alez-\'Alvaro, and Mouill\'e were in the process of independently determining $k_P$ by a different method
(see \cite{DV-GA-M}).

Before we start the outlined reasoning
, let us quickly remark on the fact that it suffices to establish this for irreducible symmetric spaces.
\begin{prop}\label{propprod}
Let $(M_1\times M_2, g_1\oplus g_2)$ be the Riemannian product of two closed Riemannian manifolds $(M_1^n,g_1)$ and $(M_2^m,g_2)$. Suppose that at two respective points $\Ric_{k_1} M_1>0$ and $\Ric_{k_2} M_2>0$. Then, at the product base point, $\Ric_k (M_1\times M_2, g_1\oplus g_2)>0$ for
\begin{align*}
k=\max \{n + k_2, m+k_1\}
\end{align*}
\end{prop}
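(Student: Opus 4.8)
The plan is to reduce the statement to a direct computation of sectional curvatures on the product, using the fact that in a Riemannian product $(M_1\times M_2, g_1\oplus g_2)$ the sectional curvature of a 2-plane splits in a controlled way: if $v,w$ are orthonormal with $v = v_1 + v_2$, $w = w_1 + w_2$ (writing $v_i,w_i$ for the projections onto $T M_i$), then $\sec(v,w)$ is a combination of the curvatures of $M_1$ and $M_2$ on the projected 2-planes, and in particular $\sec(v,w) \geq 0$ whenever both factors have nonnegative curvature on the relevant planes; more usefully, a plane spanned entirely by vectors tangent to a single factor has its sectional curvature equal to the one computed in that factor. The key reduction is therefore: given an orthonormal $(k+1)$-frame $\{v, w_1,\dots,w_k\}$ in $T_{(p_1,p_2)}(M_1\times M_2)$, I want to find, among the $w_i$, a sub-collection of $k_1$ vectors whose $M_1$-projections, together with the $M_1$-projection of $v$, behave like a $(k_1+1)$-frame witnessing $\Ric_{k_1}>0$ — or else symmetrically for $M_2$ — so that the sum $\sum_i \sec(v,w_i)$ is bounded below by a positive quantity.

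First I would fix the orthonormal frame $\{v,w_1,\dots,w_k\}$ with $k = \max\{n+k_2,\, m+k_1\}$ and split each vector into its two components. The core combinatorial observation is a dimension count: the ambient is $(n+m)$-dimensional, so a $(k+1)$-frame with $k \geq n + k_2$ is quite large, and I would argue that one can extract a large ``honest'' frame in one of the two factors. Concretely, since $k+1 \geq n+k_2+1$, the projections $w_{i,1}$ of the $w_i$ to $T M_1$ span a space of dimension at most $n$; one then shows (by a Gram–Schmidt / linear-algebra argument, possibly after discarding the at most $n$ of the $w_i$ whose $M_1$-components contribute nothing new) that at least $k_2$ of the remaining $w_i$ have $M_2$-components forming, together with $v_2$, an orthogonal $(k_2+1)$-collection in $T M_2$ — or, handling the complementary case with the roles of $M_1,M_2$ and of $n,m,k_1,k_2$ swapped, one extracts a $k_1+1$ collection in $M_1$. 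I expect the cleanest way to organize this is to say: among $w_1,\dots,w_k$, at most $\dim M_1 = n$ can be ``essentially tangent to $M_1$'' in the sense of having a component outside the span of the others' $M_1$-components plus $v_1$, so at least $k - n \geq k_2$ have their relevant behaviour governed by the $M_2$ factor; symmetrically on the other side.

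Having isolated such a sub-collection, say $w_{i_1},\dots,w_{i_{k_2}}$ keyed to $M_2$, I would then bound $\sum_{i=1}^k \sec(v,w_i) \geq \sum_{j=1}^{k_2}\sec(v,w_{i_j})$ using that sectional curvature of $M_1\times M_2$ is nonnegative on every plane when — wait, it is not; so instead I would use the sharper structural fact that $\sec_{M_1\times M_2}(v,w)\geq 0$ requires care. The honest route is: the terms $\sec(v,w_i)$ for the discarded indices are automatically $\geq 0$ only if $M_1$ has $\sec\geq 0$, which we are not given. So the argument must instead be run so that \emph{all} $k$ terms are accounted for as intermediate Ricci in the factors. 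The correct framing, which I would adopt, is the one implicit in the statement: $\Ric_k(M_1\times M_2) \geq 0$ always fails to be automatic, but $\Ric_k > 0$ at the product base point holds for the stated $k$ precisely because any $(k+1)$-frame with $k$ this large is forced, by dimension, to contain a $(k_i+1)$-subframe ``living in one factor,'' and the sectional curvatures of $M_1\times M_2$ on mixed planes, while possibly zero, are nonnegative when at least one factor contributes a genuine curvature term — here I would invoke that for a product metric $\sec(v,w) = \tfrac{1}{\|v\wedge w\|^2}\big(\|v_1\wedge w_1\|^2 \sec_{M_1} + \|v_2\wedge w_2\|^2 \sec_{M_2}\big)$ when one normalizes appropriately, so each $\sec(v,w_i)$ is a nonnegative combination of factor curvatures \emph{provided} both factors have $\sec\geq 0$ — which again we don't have in general.

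The hard part, and the place where I would have to be most careful, is exactly this: establishing positivity of $\Ric_k$ on the product without assuming nonnegative sectional curvature on the factors. I believe the resolution is that $\Ric_{k_i} > 0$ at a point, combined with the dimension count $k \geq n + k_2$ (resp.\ $m + k_1$), forces every $(k+1)$-frame to admit, after reindexing, a partition whereby the sum $\sum_{i=1}^k \sec(v,w_i)$ equals a sum over $M_1$-intermediate-Ricci expressions plus a sum over $M_2$-intermediate-Ricci expressions, each involving at least $k_1$ resp.\ $k_2$ summands in the relevant factor, so that at least one of the two blocks is a genuine $\Ric_{k_i}$-type sum and hence strictly positive, while the other block is a sum of sectional curvatures that — using the same product formula and the fact that the ``leftover'' directions in that factor number fewer than $k_i$ — can be bounded below by zero after absorbing into it the mixed cross-terms. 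Making this bookkeeping airtight, in particular choosing the right orthonormal adapted basis of the frame so that the cross-terms do not spoil the sign, is the main obstacle; I would handle it by a Gram–Schmidt argument performed factor-by-factor and by writing out the product curvature tensor $R = R_1 \oplus R_2$ explicitly on the frame, then grouping terms. Everything else — the reduction, the dimension counts $k - n \geq k_2$ and $k - m \geq k_1$ from $k = \max\{n+k_2, m+k_1\}$, and the final appeal to $\Ric_{k_i}>0$ in the chosen factor — is routine.
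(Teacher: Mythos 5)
Your starting point is the same as the paper's own (very short) proof: in a Riemannian product mixed planes are flat, and the count $k-n\ge k_2$, $k-m\ge k_1$ explains the value $k=\max\{n+k_2,m+k_1\}$; the paper's proof consists of essentially nothing beyond this observation. Your proposal, however, does not get past this point, and the mechanism you sketch for the decisive positivity step would fail. First, there are no mixed cross-terms to absorb: for a product metric one has exactly $\langle R(v,w)w,v\rangle=\langle R_1(v_1,w_1)w_1,v_1\rangle+\langle R_2(v_2,w_2)w_2,v_2\rangle$, so the only difficulty is that these two unnormalized factor curvatures may be negative and are weighted by the non-orthonormal projections $w_{i,1},w_{i,2}$. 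Second, your plan---extract from $\{w_i\}$ a sub-collection whose projections to one factor form a genuine orthonormal $(k_j+1)$-frame and bound the remaining summands below by zero---is not available: an orthonormal frame in $T_pM_1\oplus T_qM_2$ in general contains no sub-collection whose projections to a single factor are orthonormal, or even mutually orthogonal, so no honest $\Ric_{k_j}$-type sum ever appears; and without $\sec\ge 0$ on the factors the leftover terms have no sign, as you yourself observed. Since you explicitly leave exactly this step as ``the main obstacle'' to be settled by unspecified bookkeeping, the argument is not complete.

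The gap can be closed, but by treating all $k$ summands simultaneously rather than by selecting a subfamily. Write $v=v_1+v_2$ and, for each $j$ with $v_j\neq 0$, set $Q_j(u)=\langle R_j(v_j,u)u,v_j\rangle$ on $W_j=v_j^{\perp}\subset T M_j$ (adding multiples of $v_j$ to $u$ changes nothing, so one may replace each $w_{i,j}$ by its $W_j$-component $u_i$). Then $\sum_i\sec(v,w_i)=\operatorname{tr}(Q_1S_1)+\operatorname{tr}(Q_2S_2)$, where $S_j=\sum_i u_i\otimes u_i$ is the frame operator of the projections. Orthonormality of $\{v,w_1,\dots,w_k\}$ forces $0\preceq S_j\preceq\operatorname{id}$ together with $\operatorname{tr}S_1\ge k-m\ge k_1$ (when $v_1\neq0$) and $\operatorname{tr}S_2\ge k-n\ge k_2$ (when $v_2\neq0$); and $\Ric_{k_j}>0$ at the point says, by Ky Fan's principle, exactly that the sum of the $k_j$ smallest eigenvalues of $Q_j|_{W_j}$ is positive, whence all further eigenvalues are positive as well and the minimum of $\operatorname{tr}(Q_jS)$ over $0\preceq S\preceq\operatorname{id}$ with $\operatorname{tr}S\ge k_j$ is at least that positive sum. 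This makes every term with $v_j\neq0$ strictly positive and proves the proposition in the stated generality. Alternatively, if you grant the factors nonnegative sectional curvature (as in the paper's applications to symmetric spaces of compact type), each summand is nonnegative, the null set of the positive semidefinite form $Q_j$ is a subspace of dimension at most $k_j-1$, and your dimension count immediately yields one strictly positive summand; under that extra hypothesis the paper's two-line count is already essentially the whole proof.
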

\begin{proof}
Fix an orthonormal frame $x_1,\ldots, x_n$ in $T_pM_1$ and one, $y_1, \ldots, y_m$, in $T_qM_2$. Due to the product metric we obtain that $\sec(x_i,y_j)=0$ for all $i,j$; and so it may become necessary to sum up over $n+k_2$ respectively $m+k_1$ many directions in order to obtain positive intermediate Ricci.
\end{proof}
Clearly, on compact symmetric spaces due to homogeneity we do not need to worry about base points, and, due to non-negative curvature, we obtain positive intermediate Ricci for all $n+m> k\geq \max \{n + k_2, m+k_1\}$.

Since a compact symmetric space $P$ has non-negative sectional curvature,
it has also non-negative $k$-th intermediate Ricci curvature for any $k\in\N.$
Equation \eqref{EQ: Sec} now immediately implies that
$P$ has positive $k$-th intermediate Ricci curvature,
if and only if
$$k\geq k_P:=\max\limits_{\xi\in\lieP\setminus\{0\}}\dim(C_{\lieP}(\xi))$$
where $C_{\lieP}(\xi)=\{v\in\lieP \mid [v,\xi]=0\}$ is the centralizer of $\xi$ in $\lieP.$
This already shows the existence part in Proposition \ref{P:Ric_k}.
Using the decomposition given in \eqref{EQ: direct sum deco} we see that
$$C_{\lieP}(\xi)=\lieA \oplus\sum\limits_{\substack{\alpha\in\Root^+\\ \alpha(\xi)=0}}\lieP_\alpha.$$
Note that $C_{\lieP}(\xi)$ is the normal space at $\xi$ of the linear isotropy orbit of
$\xi$ (see e.g.\ \cite[pp.\ 49--51]{BCO}). Thus $k_P$ is
the maximal codimension of a non-trivial orbit of the linear isotropy representation of $P$.\par
Since
\begin{equation}
 \label{EQ: Max dim1}
 \begin{array}{rl}
k_P
 &=\dim(\lieA) +\max\limits_{\xi\in\lieP\setminus\{0\}}
 \Big(\sum\limits_{\substack{\alpha\in\Root^+\\ \alpha(\xi)=0}}\lieP_\alpha\Big)
  = r +\max\limits_{\xi\in\lieP\setminus\{0\}}
 \Big(\sum\limits_{\substack{\alpha\in\Root^+\\ \alpha(\xi)=0}}m_\alpha\Big)\\
&=r+\max\limits_{j\in\{1,\dots, r\}}\Big(\sum\limits_{\substack{\alpha\in\Root^+\\ \alpha(\xi_j)=0}}
m_\alpha\Big)
 = r+\max\limits_{j\in\{1,\dots, r\}}\Big(\sum\limits_{\alpha\in\Root_j^0}m_\alpha\Big),
\end{array}\end{equation}
where
$$\Root_j^0=\{\alpha\in\Root^+:\; \alpha(\xi_j)=0\}=
\{\alpha\in\Root^+:\;c^\alpha_j=0\},$$
we see that $k_P$ is realised by the dimension of the centralizer within $\lieP$ of a dual vector
of a simple root (case-by-case inspections below show that
these simple roots are end-nodes of the Dynkin diagram of $P$).
This reduces the calculation of $k_P$
to considerations of the root system with multiplicities of $P$.

We denote by $d_P$ the minimal dimension of a non-trivial linear isotropy orbit of $P$,
i.e.~$$d_{P}:=\dim(P)-k_P.$$

For later use we define the \emph{connectivity number} of a totally geodesic submanifold $Q$ of $P$ as
$$\sharp_P(Q)=2+d_{P}-2\cdot\Codim(Q).$$

\subsection{Case-by-case determination of $k_P$}\label{sec_cases}
We now assume that $P$ is an irreducible symmetric space of compact
type. Then its root system $\Root$ is irreducible.
The necessary details about the different reduced root systems can be found in the appendix of Bourbaki's book
\cite[Planches I--IX]{Bourbaki}. The positive
roots of a root system of exceptional type $E$ and $F$ are nicely listed in the Hasse diagrams
in the corresponding Wikipedia articles (retrieved October
27th, 2020)

\url{https://en.wikipedia.org/wiki/E6_(mathematics)},

\url{https://en.wikipedia.org/wiki/E7_(mathematics)},

\url{https://en.wikipedia.org/wiki/E8_(mathematics)}, and

\url{https://en.wikipedia.org/wiki/F4_(mathematics)}.

\noindent The information on the non-reduced root systems
is taken from \cite[Thm.\ 3.25, p.\ 475]{He}.

In all the cases we provide the formulas for $k_P$.
Via the information from Table \ref{TAB: type I Helgason list}
on the multiplicities of the short, long and extra long roots we then,
in a second step, provide the explicit values in Table \ref{TAB: type I short}.

\subsubsection{Rank one symmetric spaces}
Since the linear isotropy action of rank one symmetric spaces is transitive on the unit sphere,
we have
$$k_p=1.$$

\subsubsection{Simply laced classical reduced root systems}
An irreducible root system is called \emph{simply laced}, if all roots have the same length. In this case
the Weyl group of $P$ acts transitively on $\Root$ and all roots therefore have equal multiplicity $m.$
By Equation \eqref{EQ: Max dim1} we get:
\begin{equation}
 \label{EQ: Max dim simply laced}
k_{P}
=r+m\cdot \max\limits_{j\in\{1,\dots, r\}}|\Root^0_j|,
\end{equation}
where $|\Root^0_j|$ denotes the cardinality of $\Root^0_j.$

\begin{enumerate}[(i)]
\item \emph{Type $A_r,\; r\geq 2$}.\\
We represent $A_r$ by
$\Root^+=\{e_j-e_k:\; 1\leq j<k\leq r+1\}$ and
$\Sigma=\{\alpha_1=e_1-e_2, \, \alpha_2=e_2-e_3,\,\dots,\, \alpha_{r}=e_{r}-e_{r+1}\}$
as a system of simple roots.
With $|\Root^+|=\frac{1}{2} r(r+1)$ and
$$e_j-e_k=\sum\limits_{l=j}^{k-1} \alpha_l\quad\text{for}\quad 1\leq j<k\leq r+1$$
we get $\max\limits_{j\in\{1,\dots, r\}}|\Root^0_j|=|\Root^0_1|=\frac{1}{2}r(r+1)-r=\frac{1}{2}r(r-1)$ and therefore,
\begin{equation}
 \label{EQ: Max dim A}
k_{P}
=r+\frac{1}{2}r(r-1)m.
\end{equation}
\item \emph{Type $D_r$.}\\
Note that $D_3\cong A_3$ and $D_2\cong A_1\times A_1$
is not irreducible. For $r\geq 4$ we represent $D_r$ by
$\Root^+=\{e_j\pm e_k:\; 1\leq j<k\leq r\}$ with
$\Sigma=\{\alpha_1=e_1-e_2, \, \dots,\, \alpha_{r-1}=e_{r-1}-e_r,\, \alpha_r=e_{r-1}+e_r\}$
as system of simple roots.
We have $|\Root^+|=r(r-1)$ and
\begin{itemize}
 \item $e_j-e_k=\sum\limits_{l=j}^{k-1} \alpha_l$ for $1\leq j<k\leq r.$
 \item $e_j+e_k=\sum\limits_{l=j}^{k-1} \alpha_l+2\sum\limits_{l=k}^{r-2} \alpha_l
 +\alpha_{r-1}+\alpha_r$ for $1\leq j<k\leq r-1.$
  \item $e_j+e_r=\sum\limits_{l=j}^{r-2} \alpha_l+\alpha_r$ for $1\leq j\leq r-1.$
  \end{itemize}
We now see that $\max\limits_{j\in\{1,\dots, r\}}|\Root^0_j|=|\Root^0_1|=r(r-1)-2(r-1)=(r-1)(r-2)$
and therefore,
\begin{equation}
 \label{EQ: Max dim D}
k_{P}
=r+m(r-1)(r-2)\quad \text{for}\; r\geq 4.
\end{equation}
\item \emph{Type $E_r,\; r\in\{6,7,8\}$}.\\
Looking at the Hasse diagrams in the aforementioned Wikipedia articles or at \cite[Planches V--VII]{Bourbaki}
one sees that $\max\limits_{j\in\{1,\dots, r\}}|\Root^0_j|$ is always realized by
the simple root represented by the (right hand side) end-node of the longest branch
of the Dynkin diagram
\dynkin E6, \dynkin E7 or \dynkin E8 of $E_6,\; E_7$ or $E_8$, respectively.
\begin{center}
\begin{tabular}{c|ccc}
&$E_6$&$E_7$&$E_8$\\
\hline
$\max\limits_{j\in\{1,\dots, r\}}|\Root^0_j|$& 20 & 36 & 63\\
\end{tabular}
\end{center}
\end{enumerate}

\begin{equation}
k_{P}
=r+m\cdot \begin{cases}20 \textrm{\qquad for $E_6$}\\36 \textrm{\qquad for $E_7$}\\63 \textrm{\qquad for $E_8$}\\
\end{cases}
\end{equation}

\subsubsection{Non simply laced classical reduced root systems}
If $\Root$ is irreducible, non simply laced and reduced, then $\Root$ contains roots of exactly
two different lengths,
called \emph{short} and
\emph{long} roots.
Since the set of short roots $\Root^s$ and the set of long roots $\Root^l$
are orbits of the Weyl group of $P,$ all short roots
have equal multiplicity $m_s$ and all long roots have equal multiplicity $m_l$.
By Equation \eqref{EQ: Max dim1} we get
\begin{equation}
 \label{EQ: Max dim non simply laced}
k_{P}
=r+\max\limits_{j\in\{1,\dots, r\}}\left(m_s\cdot |\Root^s_j|
+m_l\cdot |\Root^l_j|\right),
\end{equation}

where $|\Root^s_j|$ and $|\Root^l_j|$ denote the cardinality of $\Root^s_j=\Root^s\cap\Root^0_j$
and $\Root^l_j=\Root^l\cap\Root^0_j,$ respectively.
\begin{enumerate}[(i)]
 \item \emph{Type $B_r$.}\\
We represent the root system of type $B_r$ by
$\Root^s\cap\Root^+=\{e_j: 1\leq j\leq r\}$
 and $\Root^l\cap\Root^+=\{e_j\pm e_k: 1\leq j<k\leq r\}$ with simple roots
$\Sigma=\{\alpha_1=e_1-e_2, \, \dots,\, \alpha_{r-1}=e_{r-1}-e_r,\, \alpha_r=e_r\}.$
We have $|\Root^+|=r^2$ and
\begin{itemize}
 \item $e_j=\sum\limits_{l=j}^r \alpha_l$ \, for \,$1\leq j\leq r.$
 \item $e_j-e_k=\sum\limits_{l=j}^{k-1} \alpha_l$ \, for \, $1\leq j<k\leq r.$
 \item $e_j+e_k=\sum\limits_{l=j}^{k-1} \alpha_l+2\sum\limits_{l=k}^{r} \alpha_l$
 for $1\leq j<k\leq r.$
\end{itemize}
We see that for $r\geq 4$ we have
 $$\max\limits_{j\in\{1,\dots, r\}}|\Root^s_j|=|\Root^s_1|=r-1$$
 and
  $$\max\limits_{j\in\{1,\dots, r\}}|\Root^l_j|=|\Root^l_1|=(r^2-r)-2(r-1)=(r-1)(r-2)$$
  and therefore,
\begin{equation}
 \label{EQ: Max dim B}
k_{P}
=r+m_s(r-1)+m_l(r-1)(r-2)\quad \text{for}\; r\geq 4.
\end{equation}
For $B_2$ and $B_3$ we get:
\begin{center}
\begin{tabular}{c p{15mm} c}
 \begin{tabular}{c|cc}
$j$ &$|\Root^s_j|$&$|\Root^l_j|$\\
\hline
1 & 1 & 0\\
2 & 0 & 1
\end{tabular}
 & &
 \begin{tabular}{c|cc}
$j$ &$|\Root^s_j|$&$|\Root^l_j|$\\
\hline
1 & 2 & 2 \\
2 & 1 & 1\\
3 & 0 & 3
\end{tabular}\\
\end{tabular}
\end{center}

\item\emph{Type $C_r$.}\\
We represent a root system of type $C_r$ by
$\Root^l\cap\Root^+=\{2e_j: 1\leq j\leq r\}$ and
$\Root^s\cap\Root^+=\{e_j\pm e_k: 1\leq j<k\leq r\}$ with simple roots
$\Sigma=\{\alpha_1=e_1-e_2, \, \dots,\, \alpha_{r-1}=e_{r-1}-e_r,\, \alpha_r=2e_r\}.$
We have $|\Root^+|=r^2$ and
\begin{itemize}
 \item $2e_j=2\sum\limits_{k=j}^{r-1} \alpha_k+\alpha_r$
 for $1\leq j\leq r.$
 \item $e_j-e_k=\sum\limits_{l=j}^{k-1} \alpha_l$ for $1\leq j<k\leq r.$
 \item $e_j+e_k=\sum\limits_{l=j}^{k-1} \alpha_l+2\sum\limits_{l=k}^{r-1} \alpha_l+\alpha_r$ for $1\leq j<k\leq r.$
\end{itemize}
Similar to the case $B_r$ we have that
$$\max\limits_{j\in\{1,\dots, r\}}|\Root^s_j|=|\Root^s_1|=(r^2-r)-2(r-1)=(r-1)(r-2)$$
and
$$\max\limits_{j\in\{1,\dots, r\}}|\Root^l_j|=|\Root^l_1|=r-1$$
  for $r\geq 4.$ Therefore
\begin{equation}
 \label{EQ: Max dim C}
k_{P}
=r+m_s(r-1)(r-2)+m_l(r-1)\quad \text{for}\; r\geq 4.
\end{equation}
Note that $C_2\cong B_2.$ For $C_3$ we get:
\begin{center}
\begin{tabular}{c|ccc}
$j$ &$|\Root^s_j|$&$|\Root^l_j|$\\
\hline
1 & 2 & 2& \\
2 & 1 & 1\\
3 & 3 & 0
\end{tabular}
\end{center}
\item \emph{Type $F_4$.} \\
Labelling the simple roots in the Dynkin diagram \dynkin F4 from left to right by $\alpha_1,\dots, \alpha_4$
and
looking at the Hasse diagrams in the aforementioned Wikipedia article or at \cite[Planches VIII]{Bourbaki}
we get
\begin{center}
\begin{tabular}{c|ccc}
$j$ &$|\Root^s_j|$&$|\Root^l_j|$\\
\hline
1 & 6& 3& \\
2 & 3 & 1\\
3& 1 & 3\\
4& 3 & 6
\end{tabular}
\end{center}\par
From the table in \cite[p.\ 534]{He} we see that for every irreducible symmetric space $P$ of type I whose root system is of type
$F_4,$ we have $m_l=1$ and $m_s\in\{1,2,4,8\},$ in particular $m_l<m_s$.
 We therefore get
\begin{equation}
 \label{EQ: Max dim F4}
k_{P}
=7+6 m_s,\qquad \text{if $P$ is of type I},
\end{equation}
which is realized by the orbit $X_{\{1\}}=\Ad_G(K_0)\xi_1.$

\item \emph{Type $G_2$}.\\
Since $G_2$ is reduced and has only two simple roots $\alpha_1$ and $\alpha_2$,
every non-simple positive root is of the form
$c_1\alpha_1+c_2\alpha_2$ with strictly positive integers $c_1$ and $c_2.$ Thus
$\max\limits_{j\in\{1,2\}}|\Root^s_j|=\max\limits_{j\in\{1,2\}}|\Root^l_j|=1$ and
\begin{equation}
 \label{EQ: Max dim G2}
k_{P}
=2+\max(m_s,m_l).
\end{equation}

\end{enumerate}

\subsubsection{Irreducible non-reduced root systems, type $BC_r$}
If $\Root$ is irreducible but non-reduced, then $\Root$ contains roots of exactly three different lengths,
called \emph{short}, \emph{long} and
\emph{extra long} roots.
The set $\Root^s\cap\Root^+=\{e_j: 1\leq j\leq r\}$ of positive short roots,
the set $\Root^l\cap\Root^+=\{e_j\pm e_k: 1\leq j<k\leq r\}$ of positive long roots
and the set $\Root^{xl}\cap\Root^+=\{2e_j: 1\leq j\leq r\}=2(\Root^s\cap\Root^+)$
of extra long roots are Weyl orbits. Therefore all short roots
have equal multiplicity $m_s,$ all long roots have equal multiplicity $m_l$ and all
extra long roots equal multiplicity $m_{xl}.$
By Equation \eqref{EQ: Max dim1} we get
\begin{equation}
 \label{EQ: Max dim non reduced}
k_{P}
=r+\max\limits_{j\in\{1,\dots, r\}}\left(m_s\cdot |\Root^s_j|+m_l\cdot|\Root^l_j|
+m_{xl}\cdot |\Root^{xl}_j|\right),
\end{equation}

where $|\Root^s_j|,\, |\Root^l_j|$ and $|\Root^{xl}_j|$ denote the cardinality
of $\Root^s_j=\Root^s\cap\Root^+\cap\Root^0_j, \;
\Root^l_j=\Root^l\cap\Root^+\cap\Root^0_j$ and
$\Root^{xl}_j=\Root^{xl}\cap\Root^+\cap\Root^0_j,$
respectively.\par
The simple root system corresponding to $\Root^+$ is (as for $B_r$)
$\Sigma=\{\alpha_1=e_1-e_2, \, \dots,\, \alpha_{r-1}=e_{r-1}-e_r,\, \alpha_r=e_r\}$
and $|\Root^+|=r^2+r.$
With
\begin{itemize}
 \item $e_j=\sum\limits_{l=j}^r \alpha_l$ \, for \, $1\leq j\leq r.$
 \item $e_j-e_k=\sum\limits_{l=j}^{k-1} \alpha_l$\, for \, $1\leq j<k\leq r.$
 \item $e_j+e_k=\sum\limits_{l=j}^{k-1} \alpha_l+2\sum\limits_{l=k}^{r} \alpha_l$\,
 for \, $1\leq j<k\leq r.$
  \item $2e_j=2\sum\limits_{l=j}^r \alpha_l$\, for \, $1\leq j\leq r.$
\end{itemize}
we observe that for $r\geq 4$
the maximal cardinalities of $\Root^s_j,\; \Root^l_j$ and $\Root^{xl}_j$ are all attained for $j=1.$
Similar to the $B_r$ case $B_r$ we have that
 $$\max\limits_{j\in\{1,\dots, r\}}|\Root^s_j|=|\Root^s_1|=r-1$$ and
  $$\max\limits_{j\in\{1,\dots, r\}}|\Root^l_j|=|\Root^l_1|=(r^2-r)-2(r-1)=(r-1)(r-2)$$
  and $$\max\limits_{j\in\{1,\dots, r\}}|\Root^{xl}_j|=|\Root^{xl}_1|=r-1$$ for $r\geq 4.$
  Therefore,
\begin{equation}
 \label{EQ: Max dim BC}
k_{P}
=r+(m_s+m_{xl})(r-1)+m_l(r-1)(r-2)\quad \text{for}\; r\geq 4.
\end{equation}
For type $BC_2$ and $BC_3$ we get:

\begin{center}
\begin{tabular}{c p{15mm} c}
 \begin{tabular}{c|ccc}
$j$ &$|\Root^s_j|$&$|\Root^l_j|$&$|\Root^{xl}_j|$\\
\hline
1 & 1 & 0 & 1\\
2 & 0 & 1 & 0
\end{tabular}
 & &
 \begin{tabular}{c|cccc}
$j$ &$|\Root^s_j|$&$|\Root^l_j|$&$|\Root^{xl}_j|$\\
\hline
1 & 2 & 2& 2\\
2 & 1 & 1& 1\\
3 & 0 & 3& 0
\end{tabular}\\
\end{tabular}
\end{center}

\subsection{Summarizing the results}

 In this subsection we first summarize the information on the multiplicities of short, long and extra long roots of irreducible simply-connected classical symmetric spaces of compact type in Table~\ref{TAB: type I Helgason list}, and then based on the computations in Subsection~\ref{sec_cases} we give explicit formulae for $d_{P}$, $k_{P}$, $C_{P}$ and $\sharp(Q)$ in Table \ref{TAB: type I short}. The counterpart information for the irreducible simply-connected exceptional symmetric spaces of compact type is provided in Table \ref{TAB: exceptional}. Note that the term $C_P$ relates to the other numbers via demanding connectivity to be at least $\#(Q)\geq 10$ and via
 \begin{align*}
 d_P- 2\cdot \codim Q+2\geq 10 \textrm{ with } C_P=d_P/2-4 \textrm{ such that }\codim Q\leq C_P.
 \end{align*}

We stress that the computation of the connectivity number $\sharp(Q)$ is essential to the forthcoming arguments: we will proceed by using this information in order to compare the low-degree homotopy groups---up to degree $10$---of a symmetric space and its potential submanifolds in the next section. This gives us a criterion to decide wether such a submanifold can occur or not. Note further that considering degrees of homotopy groups up to $10$ is enough to discern the different symmetric spaces.


\begin{table}[h]\centering
\caption{Structural information on classical irreducible symmetric spaces of compact type (taken from
\cite[Ch.\ X, \S 6]{He})}
\label{TAB: type I Helgason list}
\Rotatebox{90}{
\small
\begin{tabular}{ccccccccc}
CS&$\lieG$ & $\lieK$ &Cond.\ &Dynkin &$\dim(P)$ & $m_s$ & $m_l$ & $m_{xl}$ \\
\midrule
$\mathsf{A}$& $\lieSU_n\times \lieSU_n$ & $\Delta\lieSU_n$ & $2\leq n$ & $A_{n-1}$& $(n-1)(n+1)$ & & $2$ & \\
$\mathsf{A\, I}$& $\lieSU_n$ & $\lieO_n$ & $2\leq n$ & $A_{n-1}$& $\frac{1}{2}(n-1)(n+2)$ & & $1$ & \\
$\mathsf{A\, II}$ & $\lieSU_{2n}$ & $\lieSp_n$ & $2\leq n$ &$A_{n-1}$& $(n-1)(2n+1)$ & & $4$ & \\
\multirow{2}{*}{$\mathsf{A\, III}$}
& $\lieSU_{p+q}$ & $\mathfrak{s}(\lieU_p\oplus\lieU_q)$ & $2\leq p<q$ &$BC_{p}$&
$2pq$ & $2(q-p)$& $2$ & $1$\\
& $\lieSU_{2p}$ & $\mathfrak{s}(\lieU_p\oplus\lieU_p)$ & $2\leq p$ &$C_{p}$&
$2p^2$ & $2$& $1$ & \\
\multirow{2}{*}{$\mathsf{BD}$}
 & $\lieO_{2n+1}\times\lieO_{2n+1}$ & $\Delta\lieO_{2n+1}$ &
& $B_n$ & $n(2n+1)$ & $2$ & $2$ & \\
& $\lieO_{2n}\times \lieO_{2n}$ & $\Delta\lieO_{2n}$ & $3\leq n$
& $D_n$ & $n(2n-1)$ & & $2$ & \\
\multirow{2}{*}{$\mathsf{BD\, I}$}
 & $\lieO_{p+q}$ & $\lieO_p\oplus \lieO_q$ & $2\leq p<q$
& $B_p$ & $pq$ & $q-p$ & $1$ & \\
& $\lieO_{2p}$ & $\lieO_p\oplus \lieO_p$ & $3\leq p$
& $D_p$ & $p^2$ & & $1$ & \\
$\mathsf{C}$& $\lieSp_{n}\times\lieSp_n$ & $\Delta\lieSp_n$ &
& $C_n$ & $n(2n+1)$ &$2$ & $2$ & \\
$\mathsf{C\, I}$& $\lieSp_{n}$ & $\lieU_n$ & $2\leq n$
& $C_n$ & $n(n+1)$ &$1$ & $1$ & \\
\multirow{2}{*}{$\mathsf{C\, II}$}& $\lieSp_{p+q}$ & $\lieSp_p\oplus\lieSp_q$ & $2\leq p<q$
& $BC_p$ & $4pq$ &$4(q-p)$ & $4$ &$3$ \\
& $\lieSp_{2p}$ & $\lieSp_p\oplus\lieSp_p$ & $2\leq p$
& $C_p$ & $4p^2$ &$4$ & $3$ & \\
\multirow{2}{*}{$\mathsf{D\, III}$}& $\lieO_{4n}$ & $\lieU_{2n}$ & $2\leq n$
& $C_{n}$ & $2n(2n-1)$ &$4$ & $1$ & \\
 & $\lieO_{2(2n+1)}$ & $\lieU_{2n+1}$ & $2\leq n$
& $BC_{n}$ & $2n(2n+1)$ &$4$ & $4$ & $1$\\
\end{tabular}
}
\end{table}


\begin{table}[h]\centering
\caption{$d_P, k_P$ and $C_P$ for classical irreducible higher rank symmetric spaces of compact type}
\label{TAB: type I short}
\Rotatebox{90}{
\begin{tabular}{cccccccc}
CS&$\lieG$ & $\lieK$ &Cond.\
& $d_P$& $k_P$ & $C_P$&$\sharp(Q)$\\
\midrule
$\mathsf{A}$ & $\lieSU_n\times \lieSU_n$ & $\Delta\lieSU_n$ & $3\leq n$ &
$2(n-1)$ &$(n-1)^2$&$n-5$& $2n-2\cdot \Codim(Q)$\\
$\mathsf{A\, I}$ & $\lieSU_n$ & $\lieO_n$ & $2\leq n$ &
$n-1$ &$\frac{n}{2}(n-1)$&$\frac{n-9}{2}$& $n+1-2\cdot \Codim(Q)$\\
$\mathsf{A\, II}$ & $\lieSU_{2n}$ & $\lieSp_n$ & $2\leq n$ &
$4(n-1)$ &$(2n-3)(n-1)$&$2n-6$& $4n-2-2\cdot \Codim(Q)$\\
\multirow{4}{*}{$\mathsf{A\, III}$} & $\lieSU_{4}$ & $\mathfrak{s}(\lieU_2\oplus\lieU_2)$ & &
$4$ &$4$&-2& $6-2\cdot \Codim(Q)$\\
& $\lieSU_{5}$ & $\mathfrak{s}(\lieU_2\oplus\lieU_3)$ & &
$7$ &$5$&-1/2& $9-2\cdot \Codim(Q)$\\
& $\lieSU_{2+q}$ & $\mathfrak{s}(\lieU_2\oplus\lieU_q)$ & $4\leq q$ &
$2q+1$ &$2q-1$&$q-\frac{7}{2}$& $2q+3-2\cdot \Codim(Q)$\\
& $\lieSU_{p+q}$ & $\mathfrak{s}(\lieU_p\oplus\lieU_q)$ & $3\leq p\leq q$ &
$2(p+q)-3$ &$2pq-2(p+q)+3$&$p+q-\frac{11}{2}$& $2(p+q)-1-2\cdot \Codim(Q)$\\
\multirow{2}{*}{$\mathsf{BD}$} & $\lieO_{2n+1}\times\lieO_{2n+1}$ & $\Delta\lieO_{2n+1}$ &
$2\leq n$
& $4n-2$ &$n(2n-3)+2$&$2n-5$& $4n-2\cdot \Codim(Q)$\\
& $\lieO_{2n}\times\lieO_{2n}$ & $\Delta\lieO_{2n}$ & $4\leq n$
& $4n-4$ &$n(2n-5)+4$&$2n-6$& $4n-2-2\cdot \Codim(Q)$\\
\multirow{4}{*}{$\mathsf{BD\, I}$} & $\lieO_{2+q}$ & $\lieO_2\oplus \lieO_q$ & $3\leq q$
& $q$ &$q$&$\frac{q}{2}-4$& $q+2-2\cdot \Codim(Q)$\\
& $\lieO_{6}$ & $\lieO_3\oplus \lieO_3$ &
& $3$ &$6$&-5/2& $5-2\cdot \Codim(Q)$\\
 & $\lieO_{3+q}$ & $\lieO_3\oplus \lieO_q$ & $4\leq q$
& $q+1$ &$2q-1$&$\frac{q-7}{2}$& $q+3-2\cdot \Codim(Q)$\\
& $\lieO_{p+q}$ & $\lieO_p\oplus\lieO_q$ & $4\leq p\leq q$
& $p+q-2$ &$pq-p-q+2$&$\frac{p+q-10}{2}$& $p+q-2\cdot \Codim(Q)$\\
$\mathsf{C}$
 & $\lieSp_{n}\times \lieSp_{n}$ & $\Delta\lieSp_{n}$ & $2\leq n$
& $4n-2$ &$n(2n-3)+2$&$2n-5$ & $4n-2\cdot \Codim(Q)$\\
$\mathsf{C\, I}$
 & $\lieSp_{n}$ & $\lieU_n$ & $2\leq n$
& $2n-1$ &$n(n-1)+1$&$n-\frac{9}{2}$& $2n+1-2\cdot \Codim(Q)$\\
\multirow{3}{*}{$\mathsf{C\, II}$} & $\lieSp_{4}$ & $\lieSp_2\oplus\lieSp_2$ & &
 $10$ &$6$&1& $12-2\cdot \Codim(Q)$\\
 & $\lieSp_{2+q}$ & $\lieSp_2\oplus\lieSp_q$ & $3\leq q$&
 $4q+3$ &$4q-3$&$2q-\frac{5}{2}$& $4q+5-2\cdot \Codim(Q)$\\
 & $\lieSp_{p+q}$ & $\lieSp_p\oplus\lieSp_q$ & $3\leq p\leq q$&
 $4(p+q)-5$ &$4pq-4p-4q+5$&$2(p+q)-\frac{13}{2}$& $4(p+q)-3-2\cdot \Codim(Q)$\\
\multirow{5}{*}{$\mathsf{D\, III}$} & $\lieO_{8}$ & $\lieU_{4}$ &
& $6$ &$6$&-1& $8-2\cdot \Codim(Q)$\\
& $\lieO_{10}$ & $\lieU_{5}$ &
& $13$ &$7$&$\frac{5}{2}$& $15-2\cdot \Codim(Q)$\\
& $\lieO_{12}$ & $\lieU_{6}$ &
& $15$ &$15$&$\frac{7}{2}$& $17-2\cdot \Codim(Q)$\\
 & $\lieO_{14}$ & $\lieU_{7}$ &
& $21$ &$21$&$\frac{13}{2}$& $23-2\cdot \Codim(Q)$\\
 & $\lieO_{2n}$ & $\lieU_{n}$ & $8\leq n$
& $4n-7$ &$n(n-5)+7$&$2n-\frac{15}{2}$& $4n-5-2\cdot \Codim(Q)$\\
\end{tabular}
}
 \end{table}

\begin{table}[h]\centering
\caption{By courtesy we list the lowest dimension $d_P$ of a non-trivial
 isotropy orbit for irreducible exceptional symmetric spaces(structural information taken from
\cite[Ch.\ X, \S 6]{He})}
\label{TAB: exceptional}
\begin{tabular}{cccccccccc}
CS&$\lieG$ & $\lieK$ &Dynkin &$\dim(P)$ & $m_s$ & $m_l$ & $m_{xl}$ & $d_P$ & $k_P$ \\
\midrule
$\mathsf{E\, 6}$&$\lieE_6\times \lieE_6$& $\Delta\lieE_6$ & $E_6$ & $78$ & & $2$ && $32$&46 \\
$\mathsf{E\, 7}$ &$\lieE_7\times \lieE_7$&$\Delta\lieE_7$ & $E_7$ & $133$ && $2$&& $54$&79\\
$\mathsf{E\, 8}$ &$\lieE_8\times \lieE_8$&$\Delta\lieE_8$ & $E_8$ & $248$ && $2$&& $114$ &134\\
$\mathsf{E\, I}$ & $\lieE_6$ & $\lieSp_4$ & $E_6$ & $42$ & & 1 &&$16$&26 \\
$\mathsf{E\, II}$ & $\lieE_6$ & $\lieSU_6\oplus\lieSp_1$ & $F_4$ & $40$ &2 & 1 &&$21$& 19\\
$\mathsf{E\, III}$ & $\lieE_6$ & $\R\oplus\lieO_{10}$ & $BC_2$ & $32$ & 8& 6 &1 &$21$&11\\
$\mathsf{E\, IV}$ & $\lieE_6$ & $\lieF_4$ & $A_2$ & $26$ & & 8 &&$16$& 10\\
$\mathsf{E\, V}$ & $\lieE_7$ & $\lieSU_8$ & $E_7$ & $70$ & & 1 & &$27$&43\\
$\mathsf{E\, VI}$ & $\lieE_7$ & $\lieO_{12}\oplus\lieSp_1$ & $F_4$ & $64$ &4 & 1 &&$33$& 31\\
$\mathsf{E\, VII}$& $\lieE_7$ & $\R\oplus\lieE_6$ & $C_3$ & $54$ & 8& 1 &&$27$& 27\\
$\mathsf{E\, VIII}$& $\lieE_8$ & $\lieO_{16}$ & $E_8$ & $128$ & & 1 &&$57$& 71\\
$\mathsf{E\, IX}$ & $\lieE_8$ & $\lieE_7\oplus\lieSp_1$ & $F_4$ & $112$ &8 & 1 & &$57$&55\\
$\mathsf{F\, 4}$ &$\lieF_4\times\lieF_4$& $\Delta\lieF_4$ & $F_4$ & $52$ &$2$&$2$ &&$30$&22 \\
$\mathsf{F\, I}$ & $\lieF_4$ & $\lieSp_3\oplus\lieSp_1$ & $F_4$ & $28$ &1 & 1 & &$15$&13\\
$\mathsf{F\, II}$ & $\lieF_4$ & $\lieO_{9}$ & $BC_1$ & $16$ &$8$ & &$7$ &$15$&1\\
$\mathsf{G\, 2}$ &$\lieG_2\times\lieG_2 $&$\Delta\lieG_2$ & $G_2$ & $14$ &$2$ & $2$ &&$10$&4\\
$\mathsf{G}$ & $\lieG_2$ & $\lieO_4$ & $G_2$ & $8$ &1 & 1 & &$5$&3\\
\end{tabular}
\end{table}


\section{Low dimensional homotopy groups}\label{sec05}

This section is devoted to collecting low dimensional topological information of symmetric spaces. The material is basically well-known, and we may collect it from several sources. Nonetheless, either for lack of references or as a service to the reader we also illustrate how most of the information relevant to us can easily be computed by elementary techniques.

 Tables \ref{mastertable_S}, \ref{mastertable_SH}, \ref{mastertable_E}, \ref{mastertable_USH}, and \ref{mastertable_RG} provide us with information on homotopy groups of simply-connected irreducible symmetric spaces of compact type up to at least degree $10$. Note that Table~\ref{mastertable_SH} gives us the stable homotopy groups of such symmetric spaces other than spheres. Since the stable groups are $\mod 8$-periodic, we only provide the information up to degree $9$. To use this table, one should keep in mind that $\pi_{k}$ for $k\equiv 0\mod 8$ is equal to $\pi_{8}$ and $\pi_{k}$ for $k\equiv 1\mod 8$ is equal to $\pi_{9}$, if $k\geq 2$. Moreover, if there is a space whose stable homotopy ends at degree $9$ at most, then we give the information on the homotopy groups of these spaces up to degree $10$ in Table~\ref{mastertable_USH} and Table~\ref{mastertable_RG}. Furthermore, there are some symmetric spaces in low dimensions which are isometric to another one. In this case, we only give the
information of one of the
isometric spaces. If one of the spaces is a real Grassmannian space other than a sphere, then the information can be found in Table~\ref{mastertable_RG}, otherwise in Table~\ref{mastertable_SH}. Such isometric spaces are given in Table~\ref{mastertable_SI}.

The ``f'' in the tables stands for a finite (possibly zero) abelian group. The term ``r1'' stands for ``rank one'', i.e.~for a group of the form $\zz\oplus T$ where $T$ is a finite abelian torsion group. In the same manner ``r$\geq1$'' stands for rank at least one.

The symbol $\In$ actually means that the group is a subgroup, it does not have to be a direct factor, so $\zz_2\In \zz_4$ would also be denoted by ``$\zz_2\In$''. The symbol $\oplus$, however, stands for a direct factor. Further, $G^{m}$ stands for $\underbrace{G\oplus\ldots\oplus G}_{m \, \text{times}}$, where $G$ is an abelian group.

Information about the isometric symmetric spaces in low dimensions in Table~\ref{mastertable_SI} can be found in \cite[pp.~415, 416]{BCO}.
One can find the information about homotopy groups of spheres, i.e. Table~\ref{mastertable_S}, in for example \cite[p. 384]{Hatcher}. We refer to \cite[pp. 251-252]{Concise} and the reference(s) therein for the complex James numbers and the stable homotopy groups of irreducible symmetric spaces of compact type other than spheres given in Table~\ref{mastertable_SH}. For most unstable homotopy groups in Table~\ref{mastertable_USH} we refer to \cite[pp.~254--266]{Concise}. We could not find an exhaustive list of the unstable homotopy groups of Grassmannian manifolds and the symmetric space $\frac{\SU(n)}{\SO(n)}$, $n\leq 7$ in the literature. In what follows, we briefly explain how we compute the homotopy groups in degrees at most $10$.

Basically, and not surprisingly, the main tool that we use to compute the homotopy groups of a symmetric space $G/H$ is the long exact sequence of homotopy groups corresponding to the fibration
$$H\to G\to G/H.$$
For real Grassmannian spaces, we may use the fibration
$$\SO(p)\to \V_{p}(\mathbb{R}^{p+q})\to \Gr_{p}(\mathbb{R}^{p+q}),$$
as well,
where $\V_{p}(\mathbb{R}^{p+q})$ is a real Stiefel manifold. In particular, since $\pi_i(\SO(2))=0$, for $i\geq 2$, we have that
\[
\pi_i(\Gr(2, q))\cong \pi_i(\V(2, q)), \quad \text{for} \quad i\geq 3.
\]
Note that the real Stiefel manifold $ \V_{p}(\mathbb{R}^{p+q})$ is $(q-1)$-connected and, if $p\geq 2$, we have that
\[
\pi_{q}(\V_{p}(\mathbb{R}^{p+q}))=\left \{ \begin{array}{ll}
\zz& \mbox{if $q$ is even}\\
\zz_{2}& \mbox{if $q$ is odd}. \end{array}\right.
\]

For complex Grassmannian spaces we use the fibration
$$\U(p)\to \V_{p}(\mathbb{C}^{p+q})\to \Gr_{p}(\mathbb{C}^{p+q}),$$
where $\V_{p}(\mathbb{C}^{p+q})$ is a complex Stiefel manifold, which is
$2q$-connected.
We refer the reader to \cite{Gilmore, Saito} for some unstable homotopy groups of Stiefel manifolds.

For the symmetric space $\frac{\SU(n)}{\SO(n)}$, $n\leq 7$, we benefit from the following isomorphisms from \cite[Theorem~5.6]{Puttmann}:
$$\pi_{5}\bigg({\frac{\SU(n)}{\SO(n)}}\bigg)\cong\zz\oplus \pi_{4}(\SO(n)),$$
$$\pi_{6}\bigg({\frac{\SU(n)}{\SO(n)}}\bigg)\cong\pi_{5}(\SO(n)).$$

\subsection{Rational homotopy and cohomology of exceptional symmetric spaces}
In this subsection we provide the grounds for the information in Table~\ref{mastertable_E}, which incorporate the references in the literature in some cases and direct computations in other ones.

From \cite[Theorem 7.12, p.~360]{MT91} and mainly from \cite[p.~132]{Mim66} we collect the information on $\F_4$ and $\G_2$.
The homotopy groups of $\E_6$ and $\E_7$ are collected in \cite[p.~4]{Qua11}. The information on $\E_8$ stems from \cite[Theorem V, p.~995]{BS58}. From there we also cite $\pi_{10}(\G_2)\otimes \zz_3=0$.
In \cite[pp.~4-5]{Qua11} the homotopy groups of $\E_6/(\SO(10)\SO(2)$, $\E_6/\F_4$, $\E_7/\SO(2)\E_6$ and the Cayley plane $\F_4/\Spin(9)$ (see also from \cite[p.~132]{Mim66}) are collected.
The information on $\E_6/(\Sp(4)/\pm \id)$ and $\E_7/(\SU(8)/\pm\id)$ stems from \cite[p.~5]{Qua11} enhanced by our rational computations below.

For the types $\mathsf{FI}$, $\mathsf{EII}$, $\mathsf{EVI}$, $\mathsf{EIX}$ a description is given in \cite[Proposition 2.7, p.~227]{Bur92}.
We draw, however, on computations using the respective fibrations and their long homotopy sequences. This builds upon the homotopy groups of the Lie groups and, in particular, on the ones of $\SU(2)\cong \s^3$. We can also provide rational information obtained below. So we do for type $G$, i.e.~the space $\G_2/\SO(4)$.

In \cite[Proposition 2.6, p.~226]{Bur92} it is shown that $\pi_{i+1}\mathsf{EVIII}\cong \pi_i\Gr_4(\rr^{12})$ for $1\leq i\leq 5$.

\bigskip

After a review of the references, let us now to proceed by some computations. We denote by $i\co H\to G$ the inclusion of Lie groups. Recall that for simple Lie groups this inclusion induces an isomorphism on third rational cohomology.

We now shall provide information on the rational cohomology type and the rational homotopy groups at the same time, as the arguments are interdependent.

As for rational invariants we remark that by a classical theorem by Cartan compact symmetric spaces are formal (actually even geometrically formal, since the product of harmonic forms is harmonic), whence their rational homotopy type is determined by their rational cohomology algebra. In particular, the rational homotopy groups are entirely encoded in rational cohomology.

First we describe the rational homotopy type of the compact Lie groups. From Hopf's Theorem we recall that a connected compact Lie group has the rational homotopy type of a finite product of odd-dimensional spheres $G\simeq_\qq \prod_j \s^{i_j}$. In Table \ref{Ftable02} we cite the dimensions $i_j$ of the respective spheres from \cite[p.~956]{Jam95}.
\begin{table}[h]
\centering \caption{Rational types of Lie groups}
\label{Ftable02}
\begin{tabular}{l@{\hspace{3mm}} | l@{\hspace{3mm}} l }
Dynkin type & degrees of rational homotopy groups \\
\hline $A_n$ & $3,5,\dots ,2n+1$\\
$B_n$ & $3,7, \dots, 4n-1$
\\
$C_n$ & $ 3,7, \dots, 4n-1$\\
 $D_n$ & $3,7, \dots, 4n-5, 2n-1$\\
$E_6$ & $ 3,9,11,15,17,23$\\
$E_7$ & $ 3,11,15,19,23,27,35$\\
$E_8$ & $ 3,15,23, 27, 35, 39, 47, 59$\\
$F_4$ & $ 3,11,15,23$\\
$G_2$ & $ 3,11$\\
\end{tabular}
\end{table}

The construction of a Sullivan model of a homogeneous spaces is elaborately depicted both in the standard works \cite{FHT01} and \cite{FOT08}.
\subsubsection{Type $\mathsf{EI}$}
This space is $\E_6/(\Sp(4)/\pm \id)$. We form its rational model $(H^*(\B \Sp(4))\otimes H^*(\E_6),\dif)$. By degree, it follows that $\pi_{10}(\B i)\otimes \qq=\pi_{18}(\B i)\otimes \qq= \pi_{24}(\B i)\otimes \qq=0$. Hence, as cohomology is finite dimensional, we derive that $\pi_4(\B i)\otimes \qq\neq 0$, $\pi_{12}(\B i)\otimes \qq\neq 0$, $\pi_{16}(\B i)\otimes \qq\neq 0$. It follows that
\begin{align*}
\E_6/(\Sp(4)/\pm \id)\simeq_\qq \operatorname{CaP}^2 \times \s^9 \times \s^{17}
\end{align*}
The Cayley plane has rational cohomology $\qq[x_8]/x_8^3$ generated in degree $8$.

Hence the space has one-dimensional rational homotopy groups in degrees
\begin{align*}
8,9,17,23
\end{align*}

\subsubsection{Type $\mathsf{EII}$}
This space is $\E_6/(\SU(6)\SU(2))$. We form its rational model $(H^*(\B (\SU(6)\SU(2)))\otimes H^*(\E_6),\dif)$. By degree, it follows that $\pi_{10}(\B i)\otimes \qq=0$. We derive that
\begin{align*}
H^{\leq 11}(\E_6/(\SU(6)\SU(2));\qq)=(\qq[x_4,x_6,x_8])^{\leq 11}
\end{align*}
where subscript denotes degree.

The space has one-dimensional rational homotopy groups in degrees
\begin{align*}
4,6,8,15,17,23
\end{align*}
plus potentially additional one-dimensional rational homotopy in degrees $9$ and $10$ respectively $11$ and $12$.

\subsubsection{Type $\mathsf{EIII}$}
This space is $\E_6/(\SO(10)\SO(2))$. We form its rational model $(H^*(\B (\SO(10)\SO(2)))\otimes H^*(\E_6),\dif)$. By degree, it follows that $\pi_{10}(\B i)\otimes \qq=0$. As cohomology is finite dimensional, we derive that $\pi_{16}(\B i)\otimes \qq\neq 0$ and hence $\pi_{12}(\B i)\otimes \qq\neq 0$. Therefore,
\begin{align*}
H^{\leq 17}(\E_6/(\SO(10)\SO(2));\qq)=(\qq[x_1,x_2])^{\leq 17}
\end{align*}
with $\deg x_1=2$, $\deg x_2=8$.

The space has one-dimensional rational homotopy groups in degrees
\begin{align*}
2,8,17,23
\end{align*}

\subsubsection{Type $\mathsf{EIV}$}
This space is $\E_6/\F_4$. We form its rational model \linebreak[4]$(H^*(\B \F_4)\otimes H^*(\E_6),\dif)$. By degree, it follows that $\pi_{10}(\B i)\otimes \qq=\pi_{18}(\B i)\otimes \qq=0$. As cohomology is finite dimensional, we derive that $\pi_{23}(\B i)\otimes \qq\neq 0$ and $\pi_{16}(\B i)\otimes \qq\neq 0$ and hence $\pi_{12}(\B i)\otimes \qq\neq 0$. Consequently,
\begin{align*}
\E_6/\F_4 \simeq_\qq \s^9\times \s^{17}
\end{align*}

The space has one-dimensional rational homotopy groups in degrees
\begin{align*}
9,17
\end{align*}

\subsubsection{Type $\mathsf{EV}$}
This space is $\E_7/(\SU(8)/(\pm \id))$. We form its rational model $(H^*(\B (\SU(8)/(\pm \id)))\otimes H^*(\E_7),\dif)$. We derive that
\begin{align*}
H^{\leq 11}(\E_7/(\SU(8)/(\pm \id));\qq)= (\qq[x_6,x_8,x_{10}])^{\leq 11}
\end{align*}
where subscript denotes degree.

The space has one-dimensional rational homotopy groups in degrees
\begin{align*}
6,8,10,12,14,19,23,27,35
\end{align*}
plus potentially additional one-dimensional rational homotopy in degrees $11$ and $12$ respectively $15$ and $16$. Moreover, it has second homotopy group equal to $\zz_2$ as can easily be derived from the long exact sequence in homotopy.

\subsubsection{Type $\mathsf{EVI}$}
This space is $\E_7/(\SO(12)\SU(2))$. We form its rational model $(H^*(\B(\SO(12)\SU(2)))\otimes H^*(\E_7),\dif)$. As cohomology is finite dimensional, we derive that $\pi_{16}(\B i)\otimes \qq\neq 0$, $\pi_{20}(\B i)\otimes \qq\neq 0$. Consequently, also $\pi_{12}(\B i)\otimes \qq\neq 0$. We deduce that
\begin{align*}
H^{\leq 23}(\E_7/(\SO(12)\SU(2));\qq)=(\qq[x_4,x_8,x_{12}])^{\leq 23}
\end{align*}
where subscript again denotes degree.

The space has one-dimensional rational homotopy groups in degrees
\begin{align*}
4,8,12,23,27,35
\end{align*}
plus potentially additional one-dimensional rational homotopy in degrees $11$ and $12$.

\subsubsection{Type $\mathsf{EVII}$}
This space is $\E_7/(\E_6\SO(2))$. We form its rational model $(H^*(\B (\E_6\SO(2)))\otimes H^*(\E_7),\dif)$. Since cohomology is finite dimensional, we derive that $\pi_{16}(\B i)\otimes \qq\neq 0$, $\pi_{24}(\B i)\otimes \qq\neq 0$. It follows that also $\pi_{12}(\B i)\otimes \qq\neq 0$

By degree, it follows that $\pi_{10}(\B i)\otimes \qq=\pi_{18}(\B i)\otimes \qq=0$. As cohomology is finite dimensional, we derive that $\pi_{16}(\B i)\otimes \qq\neq 0$, $\pi_{20}(\B i)\otimes \qq\neq 0$. Consequently, also $\pi_{12}(\B i)\otimes \qq\neq 0$. We deduce that
\begin{align*}
H^{\leq 19}(\E_7/(\E_6\SO(2));\qq)=(\qq[x_2,x_{10},x_{18}])^{\leq 19}
\end{align*}
where subscript again denotes degree.

The space has one-dimensional rational homotopy groups in degrees
\begin{align*}
2,10,18,19,27,35
\end{align*}

\subsubsection{Type $\mathsf{EVIII}$}
This space is $\E_8/\Spin(16)$. We form its rational model $(H^*(\B \Spin(16))\otimes H^*( \E_8),\dif)$. Since cohomology is finite dimensional, we derive that $\pi_{16}(\B i)\otimes \qq\neq 0$.

We deduce that
\begin{align*}
H^{\leq 23}(\E_8/\Spin(16));\qq)=(\qq[x_8,x_{12},x_{16}, x_{20}])^{\leq 23}
\end{align*}
where subscript again denotes degree.

The space has one-dimensional rational homotopy groups in degrees
\begin{align*}
8,12,16,20,35,39,47,59
\end{align*}
plus potentially additional one-dimensional rational homotopy in degrees $23$ and $24$.

\subsubsection{Type $\mathsf{EIX}$}
This space is $\E_8/(\E_7\SU(2))$. We form its rational model $(H^*(\B (\E_7\SU(2)))\otimes H^*(\E_8),\dif)$. Since cohomology is finite dimensional, we derive that $\pi_{28}(\B i)\otimes \qq\neq 0$, $\pi_{36}(\B i)\otimes \qq\neq 0$.

We deduce that
\begin{align*}
H^{\leq 15}(\E_8/(\E_7\SU(2));\qq)=(\qq[x_4,x_{12}])^{\leq 15}
\end{align*}
where subscript again denotes degree.

The space has one-dimensional rational homotopy groups in degrees
\begin{align*}
4,12,20,39,47,59
\end{align*}
plus potentially additional one-dimensional rational homotopy in degrees $15$ and $16$ respectively $23$ and $24$.

\subsubsection{Type $\mathsf{FI}$}
This space is $\F_4/(\Sp(3)\SU(2))$. We form its rational model $(H^*(\B (\Sp(3)\SU(2)))\otimes H^*(\F_4),\dif)$. Since cohomology is finite dimensional, we derive that $\pi_{28}(\B i)\otimes \qq\neq 0$, $\pi_{36}(\B i)\otimes \qq\neq 0$.

We deduce that
\begin{align*}
H^{\leq 11}(\F_4/(\Sp(3)\SU(2));\qq)=(\qq[x_4,x_{8}])^{\leq 11}
\end{align*}
where subscript again denotes degree.

The space has one-dimensional rational homotopy groups in degrees
\begin{align*}
4,8,15, 23
\end{align*}
plus potentially additional one-dimensional rational homotopy in degrees $11$ and $12$.

\subsubsection{Type $\mathsf{FII}$}
This space is the Cayley plane $\F_4/\Spin(9)$. We form its rational model $(H^*(\B \Spin(9))\otimes H^*(\F_4),\dif)$. Since cohomology is finite dimensional, we derive that $\pi_{12}(\B i)\otimes \qq\neq 0$, $\pi_{16}(\B i)\otimes \qq\neq 0$.

We deduce that
\begin{align*}
H^{*}(\F_4/\Spin(9);\qq)=H^*(\operatorname{CaP}^2;\qq)=\qq[x_8]/x_8^3
\end{align*}
where subscript again denotes degree.

The space has one-dimensional rational homotopy groups in degrees
\begin{align*}
8,23
\end{align*}
plus potentially additional one-dimensional rational homotopy in degrees $11$ and $12$.

\subsubsection{Type $\mathsf{G}$}
This Wolf space is $\G_2/\SO(4)$, obviously rationally an $\hh\pp^2$ with one-dimensional rational homotopy groups in degrees $3$ and $11$.
From the theory of quaternion K\"ahler manifolds we also know that $\pi_2(\G_2/\SO(4))$ is finite with non-trivial two-torsion.


{ \centering 
\begin{table}[p]
\caption{\label{mastertable_SI} Special Isomorphisms.}
\begin{center}
\small{
\begin{tabular}{lclclclcl}

 $\frac{\SU(2)}{\SO(2)}$& $\cong$& {$\frac{\SO(4)}{\U(2)}$}&$\cong$& $\frac{\Sp(1)}{\U(1)}$& $\cong$&$ \mathbb{CP}^{1} $& $\cong$&{$\s^{2}$}\\
 [.1cm] \mytableextraspace
$\SU(2)$&$\cong$&$\Spin(3)$&$\cong$&$ \Sp(1)$&$\cong$&$\s^{3}$\\
[.1cm] \mytableextraspace
$\Spin(4)$&$\cong$&$\SU(2)\times \SU(2)$\\
[.1cm] \mytableextraspace
$\frac{\Sp(2)}{\Sp(1)\Sp(1)}$&$\cong$&$ \s^{4}$\\
[.1cm] \mytableextraspace
$\frac{\SU(4)}{\Sp(2)}$&$\cong$& {$ \s^{5}$}\\
[.1cm] \mytableextraspace
$\Sp(2)$ &$\cong$& {$\Spin(5)$} \\
[.1cm] \mytableextraspace
$\SU(4)$&$\cong$&$\Spin(6)$&&&&\\
[.1cm] \mytableextraspace
$\frac{\SU(4)}{\Spe(\U(2)\U(2))}$& $\cong$&{$ \frac{\SO(6)}{\SO(2)\SO(4)}$}\\
[.1cm] \mytableextraspace
{$ \frac{\SO(4)}{\SO(2)\SO(2)}$}&$\cong$&$\s^{2}\times \s^{2}$\\
[.1cm] \mytableextraspace
{$ \frac{\SO(5)}{\SO(2)\SO(3)}$}&$\cong$&$\frac{\Sp(2)}{\U(2)}$\\
[.1cm] \mytableextraspace
{$ \frac{\SO(8)}{\SO(2)\SO(6)}$}&$\cong$&$\frac{\SO(8)}{\U(4)}$\\
[.1cm] \mytableextraspace
{$ \frac{\SO(6)}{\SO(3)\SO(3)}$}&$\cong$&$\frac{\SU(4)}{\SO(4)}$\\
[.1cm] \mytableextraspace
$\frac{\SO(6)}{\U(3)}$&$\cong$& {$\mathbb{CP}^{3}$}\\
\end{tabular}
}
\end{center}

\end{table}
}
  \clearpage


{ \centering 

\begin{table}[p]
\caption{\label{mastertable_S} Homotopy groups of spheres.}

\Rotatebox{90}{
\scriptsize{
\begin{tabular}{p{28mm}ccccccccccccc}
  & $\s^{1}$ & $\s^{2}$& $\s^{3}$ & $\s^{4}$ & $\s^{5}$ &$\s^{6}$ &$\s^{7}$ &$\s^{8}$ &$\s^{9}$&$\s^{10}$ & $\s^{11}$& $\s^{12}$ & $\s^{\geq 13}$
 \\
\midrule
$\pi_{<n}(\s^{n})$ & { $0$}& { $0$ }& { $0$ }& { $0$} &{ $0$ }& { $0$ }& { $0$}& { $0$}& { $0$ }&$0$&$0$&$0$&$0$\\
[.1cm] \mytableextraspace
$\pi_{0+n}(\s^{n})$ & $\zz$&$\zz$ & $\zz$ & $\zz$ &$\zz$ & $\zz$ &$\zz$&$\zz$& $\zz$&$\zz$&$\zz$&$\zz$&$\zz$\\
[.1cm] \mytableextraspace
$\pi_{1+n}(\s^{n})$ & $0$&$\zz$ & $\zz_{2}$ & $\zz_{2}$ &$\zz_{2}$ & $\zz_{2}$ &$\zz_{2}$&$\zz_{2}$& $\zz_{2}$&$\zz_{2}$&$\zz_{2}$&$\zz_{2}$&$\zz_{2}$\\
[.1cm] \mytableextraspace
 $\pi_{2+n}(\s^{n})$ & $0$&$\zz_{2}$ & $\zz_{2}$ & $\zz_{2}$ &$\zz_{2}$ & $\zz_{2}$ &$\zz_{2}$&$\zz_{2}$& $\zz_{2}$&$\zz_{2}$&$\zz_{2}$&$\zz_{2}$&$\zz_{2}$\\
[.1cm] \mytableextraspace
$\pi_{3+n}(\s^{n})$ & $0$&$\zz_{2}$ & $\zz_{12}$ & $\zz\oplus\zz_{12}$ &$\zz_{24}$ & $\zz_{24}$ &$\zz_{24}$&$\zz_{24}$& $\zz_{24}$&$\zz_{24}$&$\zz_{24}$&$\zz_{24}$&$\zz_{24}$\\
[.1cm] \mytableextraspace
$\pi_{4+n}(\s^{n})$ & $0$&$\zz_{12}$ & $\zz_{2}$ & $\zz_{2}^{2}$ &$\zz_{2}$ & $0$ &$0$&$0$& $0$&$0$&$0$&$0$&$0$\\
[.1cm] \mytableextraspace
$\pi_{5+n}(\s^{n})$ & $0$&$\zz_{2}$ & $\zz_{2}$ & $\zz_{2}^{2}$ &$\zz_{2}$ & $\zz$ &$0$&$0$& $0$&$0$&$0$&$0$&$0$\\
[.1cm] \mytableextraspace
$\pi_{6+n}(\s^{n})$ & $0$&$\zz_{2}$ & $\zz_{3}$ & $\zz_{24}\oplus\zz_{3}$ &$\zz_{2}$ & $\zz_{2}$ &$\zz_{2}$&$\zz_{2}$& $\zz_{2}$&$\zz_{2}$&$\zz_{2}$&$\zz_{2}$&$\zz_{2}$\\
[.1cm] \mytableextraspace
$\pi_{7+n}(\s^{n})$ & $0$&$\zz_{3}$ & $\zz_{15}$ & $\zz_{15}$ &$\zz_{30}$ & $\zz_{60}$ &$\zz_{120}$&$\zz\oplus\zz_{120}$& $\zz_{240}$&$\zz_{240}$&$\zz_{240}$&$\zz_{240}$&$\zz_{240}$\\
[.1cm] \mytableextraspace
$\pi_{8+n}(\s^{n})$ & $0$&$\zz_{15}$ & $\zz_{2}$ & $\zz_{2}$ &$\zz_{2}$ & $\zz_{24}\oplus\zz_{2}$ &$\zz_{2}^{3}$&$\zz_{2}^{4}$& $\zz_{2}^{3}$&$\zz_{2}^{2}$&$\zz_{2}^{2}$&$\zz_{2}^{2}$&$\zz_{2}^{2}$\\
[.1cm] \mytableextraspace
$\pi_{9+n}(\s^{n})$ & $0$&$\zz_{2}$ & $\zz_{2}^{2}$ & $\zz_{2}^{3}$ &$\zz_{2}^{3}$ & $\zz_{2}^{3}$ &$\zz_{2}^{4}$&$\zz_{2}^{5}$& $\zz_{2}^{4}$&$\zz\oplus \zz_{2}^{3}$&$\zz_{2}^{3}$&$\zz_{2}^{3}$&$\zz_{2}^{3}$\\
[.1cm] \mytableextraspace
$\pi_{10+n}(\s^{n})$ & $0$&$\zz_{2}^{2}$ & $\zz_{12}\oplus\zz_{2}$ & $\zz_{120}\oplus\zz_{12}\oplus\zz_{2}$ &$\zz_{72}\oplus\zz_{2}$ & $\zz_{72}\oplus\zz_{2}$ &$\zz_{24}\oplus\zz_{2}$&$\zz_{24}^{2}\oplus\zz_{2}$& $\zz_{24}\oplus\zz_{2}$&$\zz_{12}\oplus\zz_{2}$&$\zz_{6}\oplus\zz_{2}$&$\zz_{6}$&$\zz_{6}$\\

\end{tabular}
}
}

\end{table}
}
  \clearpage


{ \centering 
\begin{table}[p]
\caption{\label{mastertable_SH} Stable homotopy groups of simply-connected classical symmetric spaces of compact type ($p\leq q$).}
\begin{center}
\small{
\begin{tabular}{p{28mm}ccccccccc}
\multirow{2}{*}{} &
  \multicolumn{9}{c}{$k$} 
  \\ \cline{2-10}
  & $1$ & $2$& $3$ & $4$ & $5$ &$6$ &$7$ &$8$ &$9$
 \\
\midrule
$\pi_{k}(\SU(n))$ & { $0$}& { $0$ }& { $\zz$ }& { $0$} &{ $\zz$ }& { $0$ }& { $\zz$}& { $0$}& { $\zz$ }\\
{$k\leq 2n-1$}&&&&&&&&&\\
[.1cm] \mytableextraspace
$\pi_{k}(\Spin(n))$ & $0$&$0$ & $\zz$ & $0$ &$0$ & $0$ &$\zz$&$\zz_{2}$& $\zz_{2}$\\
{$k\leq n-2$}&&&&&&&&&\\
[.1cm] \mytableextraspace
$\pi_{k}(\Sp(n))$ & $0$&$0$ & $\zz$ & $\zz_{2}$ &$\zz_{2}$ & $0$ &$\zz$&$0$& $0$ \\
 {$k\leq 4n+1$ }&&&&&&&&&\\
[.1cm] \mytableextraspace
 $\pi_{k}(\frac{\SU(n)}{\SO(n)})$ &$0$&$\zz_{2}$ & $\zz_{2}$ & $0$ &$\zz$ & $0$ &$0$&$0$& $\zz$ \\
 $k\leq n-1$ &&&&&&&&&\\
[.1cm] \mytableextraspace
$\pi_{k}(\frac{\SU(2n)}{\Sp(n)})$ & $0$&$0$ & $0$ & $0$ &$\zz$ & $\zz_{2}$ &$\zz_{2}$&$0$& $\zz$ \\
$k\leq 4n-1$ &&&&&&&&&\\
[.1cm] \mytableextraspace
$\pi_{k}(\frac{\SU(p+q)}{\Spe(\U(p)\U(q))})$ & $0$&$\zz$ & $0$ & $\zz$ &$0$ & $\zz$ &$0$&$\zz$& $0$ \\
$p\geq 2, q\geq 5$,\\$k< 2q+1$ &&&&&&&&\\
[.1cm] \mytableextraspace
$\pi_{k}(\frac{\SO(p+q)}{\SO(p)\SO(q)})$ & $0$ & $\zz_2$&$0$ & $\zz$ & $0$ &$0$ &$0$ &$\zz$ &$\zz_2$ \\
$p\geq 11$ $k<q$&&&&&&&&\\
[.1cm] \mytableextraspace
$\pi_{k}(\frac{\SO(2n)}{\U(n)})$ & $0$&$\zz$ & $0$&$0$ & $0$ & $\zz$ &$\zz_{2}$ &$\zz_{2}$ &$0$ \\
$k\leq 2n-2$&&&&&&&&\\
[.1cm] \mytableextraspace
$\pi_{k}(\frac{\Sp(n)}{\U(n)})$ & $0$&$\zz$ & $\zz_{2}$&$\zz_{2}$ & $0$ & $\zz$ &$0$ &$0$ &$0$ \\
$k\leq 2n$&&&&&&&&\\
[.1cm] \mytableextraspace
$\pi_{k}(\frac{\Sp(p+q)}{\Sp(p)\Sp(q)})$ & $0$ & $0$ & $0$ &$\zz$ & $\zz_2$ & $\zz_2$ & $0$ & $\zz$ & $0$ \\
$p\geq 2,$ $k<4q+3$&&&&&&&&
\end{tabular}
}
\end{center}
\end{table}
}
  \clearpage



\begin{table}[p]
\caption{\label{mastertable_E} Homotopy groups of irreducible exceptional symmetric spaces of compact type.}
\Rotatebox{90}{
\small{
\begin{tabular}{lccccccccccccccc}
  & $\pi_1$ & $\pi_2$& $\pi_3$ & $\pi_4$ & $\pi_5$ &$\pi_6$ &$\pi_7$ &$\pi_8$ &$\pi_9$&$\pi_{10}$&$\pi_{11}$&$\pi_{12}$&$\pi_{13}$&$\pi_{14}$&$\pi_{15}$
 \\
\midrule
 $\G_2$ & $0$&$0$&$\zz$ &$0$&$0$&$\zz_3$&$0$&$\zz_2$&$\zz_6$ &$0$&$\zz\oplus \zz_2$&$0$&$0$&$\zz_{168}\oplus \zz_2$&$\zz_2$\\
[.1cm] \mytableextraspace
$\F_4$ & $0$&$0$&$\zz$&$0$&$0$&$0$&$0$&$\zz_2$&$\zz_2$ &$0$&$\zz\oplus \zz_2$&$0$&$0$&$\zz_2$&$\zz$ \\
[.1cm] \mytableextraspace
 $\E_6$ &$0$ & $0$ & $\zz$&$0$&$0$&$0$&$0$&$0$&$\zz$ & $0$ & $\zz$ & $\zz_{12}$ & $0$ & $0$ & $\zz$\\
[.1cm] \mytableextraspace
$\E_7$ & $0$ & $0$ & $\zz$&$0$&$0$&$0$&$0$&$0$&$0$ & $0$ & $\zz$ & $\zz_{2}$ & $\zz_2$ & $0$ & $\zz$ \\
[.1cm] \mytableextraspace
$\E_8$ & $0$ & $0$ & $\zz$&$0$&$0$&$0$&$0$&$0$&$0$ & $0$ & $0$ & $0$ & $0$ & $0$ & $\zz$ \\
[.1cm] \mytableextraspace
 $\mathsf{EI}$ & $0$& $\zz_2$&$0$&$0$&$\zz_2$&$\zz_2$&$\zz_2$&$\zz$&$\zz$ &$0$&f&f&f&f&f
\\
[.1cm] \mytableextraspace$\mathsf{EII}$ & $0$&$0$&f&$\zz$&$\zz_2$&$\zz\oplus\zz_2$&$\zz_{12}$&$\zz\oplus \zz_2$& &&&&f&f&r1 \\
[.1cm] \mytableextraspace $\mathsf{EIII}$ &$0$ &$\zz$&$0$&$0$&$0$&$0$&$0$&$\zz$&$\zz_2$&$\zz_2$ &$\zz_{24}$&$0$&$0$&$\zz_2$&$\zz_{120}$
\\
[.1cm] \mytableextraspace $\mathsf{EIV}$ & $0$ & $0$& $0$& $0$& $0$& $0$& $0$& $0$&$\zz$ &$\zz_2$&$\zz_2$&$\zz_{24}$&$0$&$0$&$\zz_2$
\\
[.1cm] \mytableextraspace $\mathsf{EV}$ & $0$& $\zz_2$&$0$&$0$&$0$&$\zz$&$0$&$\zz$&$0$ &$\zz$&&r$\geq 1$&f&r1&\\
\\
[.1cm] \mytableextraspace$\mathsf{EVI}$ & $0$ &$\zz_2$&f&$\zz$&$\zz_2$&$\zz_2$&$\zz_2$&$\zz\oplus \zz_2$&$\zz_2\oplus \zz_2$ &$\zz_6$&&r$\geq 1$&f&f&f
\\
[.1cm] \mytableextraspace $\mathsf{EVII}$ & $0$&$\zz$ &$0$&$0$&$0$&$0$&$0$&$0$&$0$ & $\zz$&$\zz_2$&$\zz_2$&$\zz_{24}$&$0$& $0$
\\
[.1cm] \mytableextraspace $\mathsf{EVIII}$ & $0$ &$0$&f&$0$&$0$&$0$&$0$&$\zz$ &$\zz_2$&$\zz_2$&$0$&$\zz$&$0$&$0$&f\\
\\
[.1cm] \mytableextraspace $\mathsf{EIX}$ & $0$&$0$ &f&$\zz$&$\zz_2$&$\zz_2$&$\zz_{12}$&$\zz_2$&$\zz_2$ 
\\
[.1cm] \mytableextraspace $\mathsf{FI}$&$0$&$0$ &f&$\zz$&$\zz_2$&$\zz_2$&$\zz_{12}$&$\zz_2$&$\zz_2$ &$\zz_3$&$\zz_{15}$&$\zz\oplus \zz_2$&$\zz_2^3$&$\zz_2^2\oplus \zz_{12}$
\\
[.1cm] \mytableextraspace $\mathsf{FII}$ &$0$&$0$&$0$& $0$&$0$&$0$&$\zz$&$\zz_2$ &$\zz_2$&$\zz_{24}$&$0$&$0$&$\zz_2$& $\zz_{120}$
\\
[.1cm] \mytableextraspace$\mathsf{G}$ & $0$& $\zz_2$&f&$\zz$&$\zz_2^2$&$\zz_2^2\oplus$&$\zz_3\oplus\zz_4^2\oplus$&$\zz_2^2\In$&f &f&r1&f&f&f&f\\

%
\end{tabular}
}
}

\end{table}

  \clearpage


 { \centering 
\begin{table}[p]
\begin{center}
\caption{\label{mastertable_USH} Unstable homotopy groups of irreducible classical symmetric spaces of compact type.}
\scriptsize{
\begin{tabular}{lcccccccccc}
 & $\pi_1$ & $\pi_2$& $\pi_3$ & $\pi_4$ & $\pi_5$ &$\pi_6$ &$\pi_7$ &$\pi_8$ &$\pi_9$ & $\pi_{10}$
 \\
\midrule
$\SU(3)$ & { $0$}& { $0$ }& { $\zz$ }& { $0$} &{ $\zz$ }& { $\zz_{6}$ }& { $0$}& { $\zz_{12}$}& { $\zz_{3}$ }&$\zz_{30}$\\
[.1cm] \mytableextraspace
$\SU(4)$ & $0$ &$0$& $\zz$ & $0$ &$\zz$ &$0$ &$\zz$ &$\zz_{4!}$&$\zz_2$ &$\zz_{120}\oplus \zz_2$\\
[.1cm] \mytableextraspace
$\SU(5)$&{ $0$}& { $0$ }& { $\zz$ }& { $0$} &{ $\zz$ }& { $0$ }& { $\zz$}& { $0$}& { $\zz$ }&$\zz_{120}$\\
[.1cm] \mytableextraspace
$\Spin(3)$& $0$&$0$ & $\zz$ & $\zz_2$ &$\zz_2$ & $\zz_{12}$ &$\zz_2$&$\zz_2$& $\zz_3$ &$Z_{15}$\\
[.1cm] \mytableextraspace
 $\Spin(4)$ & $0$&$0$&$\zz^2$ &$\zz_2^2$ &$\zz_2^2$ & $\zz_{12}^2$ &$\zz_2^2$&$\zz_2^2$& $\zz_3^2$ &$Z_{15}^2$\\
[.1cm] \mytableextraspace
$\Spin(5)$ &$0$ & $0$ & $\zz$ &$\zz_2$ &$\zz_2$ &$0$ &$\zz$ &$0$ & $0$&$\zz_{120}$ \\
[.1cm] \mytableextraspace
 $\Spin(6)$&$0$ &$0$& $\zz$ & $0$ &$\zz$ &$0$ &$\zz$ &$\zz_{4!}$& $\zz_2$ &$\zz_{120}\oplus \zz_2$ \\
 [.1cm] \mytableextraspace
$\Spin(7)$ &$0$ &$0$& $\zz$ & $0$ &$0$ &$0$ &$\zz$ &$\zz_{2}^2$& $\zz_{2}^2$&$\zz_{8}$ \\
[.1cm] \mytableextraspace
 $\Spin(8)$ &$0$ &$0$& $\zz$ & $0$ &$0$ &$0$ &$\zz\oplus\zz$ &$\zz_{2}^3$& $\zz_{2}^3$&$\zz_{8}\oplus\zz_{24}$ \\
 [.1cm] \mytableextraspace
$\Spin(9)$ &$0$ &$0$& $\zz$ & $0$ &$0$ &$0$ &$\zz$ &$\zz_{2}^2$& $\zz_{2}^2$&$\zz_{8}$ \\
[.1cm] \mytableextraspace
$\Spin(10)$&$0$ &$0$& $\zz$ & $0$ &$0$ &$0$ &$\zz$&$\zz_2$&$\zz\oplus\zz_{2}$&$\zz_{4}$\\
[.1cm] \mytableextraspace
$\Spin(11)$&$0$ &$0$& $\zz$ & $0$ &$0$ &$0$ &$\zz$&$\zz_2$&$\zz_{2}$&$\zz_{2}$\\
[.1cm] \mytableextraspace
$\tfrac{\SU(3)}{\SO(3)}$& $0$ & $\zz_2$ & $f$ &$0$ &$\zz\oplus\zz_2$ & $\zz_{2}$ &$f$ &$f$ & $f$ & $f$\\
 [.1cm] \mytableextraspace
$\tfrac{\SU(4)}{\SO(4)}$& $0$ & $\zz_2$ & $\zz_2$ &$\zz$ &$ \zz\oplus\zz_2^{2} $&$ \zz_2^{2}$ & $r1$& $f$&$f$&$f$ \\
 [.1cm] \mytableextraspace
$\tfrac{\SU(5)}{\SO(5)}$& $0$ & $\zz_2$ & $\zz_2$ & $0$ &$\zz\oplus\zz_{2}$ & $\zz_2$ & $f$& $0$& $\zz$ &$0$\\
 [.1cm] \mytableextraspace
$\tfrac{\SU(6)}{\SO(6)}$& $0$ & $\zz_2$ & $\zz_2$ & $0$ &$\zz$ & $\zz$ &$f$ & $0$&$r1$ &$\zz_{2}$ \\
 [.1cm] \mytableextraspace
$\tfrac{\SU(7)}{\SO(7)}$& $0$ & $\zz_2$ & $\zz_2$ & $0$ &$\zz$ & $0$ & $f$&$0$ & $r1$&$\zz_2^{2}$ \\
 [.1cm] \mytableextraspace
$\tfrac{\SU(8)}{\SO(8)}$ & $0$ & $\zz_2$ & $\zz_2$ & $0$ &$\zz$ & $0$ & $0$&$\zz$ & $\zz\oplus\zz_{2}^{2}$ & $\zz_{8}^{2}\oplus \zz_{2}$ \\
 [.1cm] \mytableextraspace
 $\tfrac{\SU(9)}{\SO(9)}$ & $0$ & $\zz_2$ & $\zz_2$ &$0$ & $\zz$ & $0$ & $0$ & $0$ & $\zz\oplus \zz_{2}$&$\zz\oplus\zz_{2}$\\
 [.1cm] \mytableextraspace
$\tfrac{\SU(10)}{\SO(10)}$ & $0$ & $\zz_2$ & $\zz_2$ &$0$ & $\zz$ & $0$ & $0$ & $0$ & $\zz$&$\zz\oplus\zz_{2}$\\
[.1cm] \mytableextraspace
$\tfrac{\SU(5)}{\Spe(\U(2)\U(3))}$ & $0$ & $\zz$ & $0$ &$\zz$ & $\zz_{2}$ & $\zz_{2}$ & $r1$ & $0$ & $r1$&$f$\\
[.1cm] \mytableextraspace
$\tfrac{\SU(6)}{\Spe(\U(2)\U(4))}$ & $0$ & $\zz$ & $0$ &$\zz$ & $\zz_{2}$ & $\zz_{2}$ & $\zz_{12}$ & $\zz_{2}$ & $r1$&$\zz_{3}\oplus \zz_{2}$\\
[.1cm] \mytableextraspace
$\tfrac{\SU(6)}{\Spe(\U(3)\U(3))}$ & $0$ & $\zz$ & $0$ &$\zz$ & $0$ & $\zz$ & $r1$ & $0$ & $r1$&$\zz_{3}\oplus \zz_{3}$\\
[.1cm] \mytableextraspace
$\tfrac{\SU(7)}{\Spe(\U(3)\U(4))}$ & $0$ & $\zz$ & $0$ &$\zz$ & $0$ & $\zz$ & $\zz_{6}$ & $0$ & $r1$&$\zz_{3}\oplus \zz_{2}$\\
[.1cm] \mytableextraspace
$\tfrac{\SU(8)}{\Spe(\U(4)\U(4))}$ & $0$ & $\zz$ & $0$ &$\zz$ & $0$ & $\zz$ & $0$ & $\zz$ & $r1$&$\zz_{2}\oplus \zz_{2}$\\
[.1cm] \mytableextraspace
$\frac{\SO(10)}{\U(5)}$ & $0$&$\zz$ & $0$&$0$ & $0$ & $\zz$ &$\zz_{2}$ &$\zz_{2}$ &$\zz_{24}$&$0$ \\
[.1cm] \mytableextraspace
$\frac{\Sp(3)}{\U(3)}$ & $0$&$\zz$ & $\zz_{2}$&$\zz_{2}$ & $0$ & $\zz$ &$\zz$ &$0$ &$\zz_{12}$&$\zz_{3}$ \\
[.1cm] \mytableextraspace
$\frac{\Sp(4)}{\U(4)}$ & $0$&$\zz$ & $\zz_{2}$&$\zz_{2}$ & $0$ & $\zz$ &$0$ &$0$ &$\zz_{24}$&$\zz_{2}$ \\
[.1cm] \mytableextraspace
$\frac{\Sp(1+q)}{\Sp(1)\Sp(q)}$, $q\geq 2$ & $0$ & $0$ & $0$ &$\zz$ & $\zz_2$ & $\zz_2$ & $\zz_{12}$ & $\zz_{2}$ & $\zz_{2}$&$\zz_{3}$ \\
\end{tabular}
}
\end{center}
\end{table}
}
 \clearpage

{
 \centering 

\begingroup

\tiny

\begin{longtable}{lcccccccccc}
\caption{Unstable homotopy groups of Real Grassmannian.}\label{mastertable_RG}\\
 & $\pi_1$ & $\pi_2$& $\pi_3$ & $\pi_4$ & $\pi_5$ &$\pi_6$ &$\pi_7$ &$\pi_8$ &$\pi_9$ & $\pi_{10}$
 \\
\midrule
$\tfrac{\SO(4)}{\SO(2)\times \SO(2)}$& $0$ & $\zz^{2}$ & $\zz^{2}$ & $\zz_{2}^{2}$ & $ \zz_{2}^{2}$& $\zz_{12}^{2}$ &$\zz_{2}^{2}$&$\zz_{2}^{2}$&$\zz_{3}^{2}$&$\zz_{15}^{2}$\\
[.1cm] \mytableextraspace
$\tfrac{\SO(5)}{\SO(2)\times \SO(3)}$ & $0$&$\zz$ & $\zz_{2}$&$\zz_{2}$ & $\zz_{4}$ & $\zz_{2}$ &$\zz\oplus\zz_{2}$ &$\zz_{2}$ &$0$ &$\zz_{3}$\\
[.1cm] \mytableextraspace
$\tfrac{\SO(6)}{\SO(2)\times \SO(4)}$& $0$ & $\zz$ & $0$ & $\zz$ & $\zz\oplus \zz_{2}$& $\zz_2^2$ &$r1$&$f$&$f$&$f$\\
[.1cm] \mytableextraspace
$\tfrac{\SO(7)}{\SO(2)\times \SO(5)}$& $0$ & $\zz$ & $0$ & $0$ &$\zz_{2}$& $\zz_{2}$ &$\zz_4$ &$\zz_2^2$&$f$&$f$ \\
[.1cm] \mytableextraspace
$\tfrac{\SO(8)}{\SO(2)\times \SO(6)}$& $0$&$\zz$ & $0$&$0$ & $0$ & $\zz$ &$\zz\oplus\zz_{2}$ &$\zz_{2}^{2}$ &$\zz_{2}\oplus \zz_{24}$& $\zz_{8}\oplus \zz_{3}$
 \\
 [.1cm] \mytableextraspace
$\tfrac{\SO(9)}{\SO(2)\times \SO(7)}$& $0$ & $\zz$ & $0$ & $0$ &$0$& $0$ & $\zz_2$ &$\zz_{2}$&$\zz_{4}$&$\zz_{2}^{2}$ \\
[.1cm] \mytableextraspace
$\tfrac{\SO(10)}{\SO(2)\times \SO(8)}$& $0$ & $\zz$ & $0$ & $0$ &$0$& $0$ & $0$ &$\zz$& $\zz\oplus \zz_2$& $\zz_2^2 $ \\
[.1cm] \mytableextraspace
$\tfrac{\SO(11)}{\SO(2)\times \SO(9)}$& $0$ & $\zz$ & $0$ & $0$ &$0$& $0$ & $0$ &$0$& $\zz_2$&$\zz_{2}$ \\
[.1cm] \mytableextraspace
$\tfrac{\SO(12)}{\SO(2)\times \SO(10)}$& $0$ & $\zz$ & $0$ & $0$ &$0$& $0$ & $0$ &$0$& $0$& $\zz$ \\
[.1cm] \mytableextraspace
$\tfrac{\SO(2+q)}{\SO(2)\times \SO(q)}$, $q\geq 11 $& $0$ & $\zz$ & $0$ & $0$ &$0$& $0$ & $0$ &$0$& $0$& $0$ \\
[.1cm] \mytableextraspace
$\tfrac{\SO(6)}{\SO(3)\times \SO(3)}$&$0$ & $\zz_2$ & $\zz_2$ &$\zz$ &$ \zz\oplus\zz_2^{2} $&$ \zz_2^{2}$ & $r1$& $f$&$f$&$f$ \\
[.1cm] \mytableextraspace
$\tfrac{\SO(7)}{\SO(3)\times \SO(4)}$& $0$ & $\zz_2$ & $0$ & $\zz^{2}$ &$\zz_{2}^{3}$ &$\zz_2^3$ &$r1$&$f$&$f$&$f$ \\
[.1cm] \mytableextraspace
$\tfrac{\SO(8)}{\SO(3)\times \SO(5)}$& $0$ & $\zz_2$ & $0$ & $\zz$ &$\zz_2^2$&$\zz_2^2$& $r1$&$f$&$f$&$f$ \\
[.1cm] \mytableextraspace
$\tfrac{\SO(9)}{\SO(3)\times \SO(6)}$& $0$ & $\zz_2$ & $0$ & $\zz$ &$\zz_2$&$\zz_2\oplus \zz$ &$f$ &$f$&$f$&$f$ \\
[.1cm] \mytableextraspace
$\tfrac{\SO(10)}{\SO(3)\times \SO(7)}$& $0$ & $\zz_2$ & $0$ & $\zz$ &$\zz_2$&$\zz_2 $ &$f$ &$\zz_{2}$& $r1$&$f$\\
[.1cm] \mytableextraspace
$\tfrac{\SO(11)}{\SO(3)\times \SO(8)}$& $0$ & $\zz_2$ & $0$ & $\zz$ &$\zz_2$&$\zz_2 $ &$\zz_{12}$ &$f$&$f$&$f$ \\
[.1cm] \mytableextraspace
$\tfrac{\SO(12)}{\SO(3)\times \SO(9)}$& $0$ & $\zz_2$ & $0$ & $\zz$ &$\zz_2$&$\zz_2 $ &$\zz_{12}$ &$\zz_2$& $f$ &$f$\\
[.1cm] \mytableextraspace
$\tfrac{\SO(13)}{\SO(3)\times \SO(10)}$& $0$ & $\zz_2$ & $0$ & $\zz$ &$\zz_2$&$\zz_2 $ &$\zz_{12}$ &$\zz_2$& $\zz_3$&$r1$\\
[.1cm] \mytableextraspace
$\tfrac{\SO(3+q)}{\SO(3)\times \SO(q)}$, $q\geq 11 $& $0$ & $\zz_2$ & $0$ & $\zz$ &$\zz_2$&$\zz_2 $ &$\zz_{12}$ &$\zz_2$& $\zz_3$&$\zz_3$\\
[.1cm] \mytableextraspace
$\tfrac{\SO(8)}{\SO(4)\times \SO(4)}$& $0$ & $\zz_2$ & $0$ & $\zz^3$ & $\zz_{2}^{4}$& $\zz_{2}^{4}$&$r\geq 1$ &$f$&$f$&$f$ \\
[.1cm] \mytableextraspace
$\tfrac{\SO(9)}{\SO(4)\times \SO(5)}$& $0$ & $\zz_2$ & $0$ & $\zz^2$ &$\zz_2^3$& $\zz_2^3$&$f$ &$f$& $f$&$f$\\
[.1cm] \mytableextraspace
$\tfrac{\SO(10)}{\SO(4)\times \SO(6)}$& $0$ & $\zz_2$ & $0$ & $\zz^2$ &$\zz_2^2$&$\zz_2^2\oplus \zz$ &$f$ &$f$&$r1$&$f$ \\
[.1cm] \mytableextraspace
$\tfrac{\SO(11)}{\SO(4)\times \SO(7)}$& $0$ & $\zz_2$ & $0$ & $\zz^2$ &$\zz_2^2$&$\zz_{2}^2$ &$f$ &$\zz_2^2$& $\zz_2^3$&$\zz_2^2\oplus\zz_3^2$\\
[.1cm] \mytableextraspace
$\tfrac{\SO(12)}{\SO(4)\times \SO(8)}$& $0$ & $\zz_2$ & $0$ & $\zz^2$ &$\zz_2^2$&$\zz_2^2$&$\zz_{12}^2$ & $f$&$r1$&$f$ \\
[.1cm] \mytableextraspace
$\tfrac{\SO(13)}{\SO(4)\times \SO(9)}$& $0$ & $\zz_2$ & $0$ & $\zz^2$ &$\zz_2^2$&$\zz_2^2$&$\zz_{12}^2$ &$ \zz_2^2$&$f$&$f$ \\
[.1cm] \mytableextraspace
$\tfrac{\SO(14)}{\SO(4)\times \SO(10)}$& $0$ & $\zz_2$ & $0$ & $\zz^2$ &$\zz_2^2$&$\zz_2^2 $ &$\zz_{12}^2$ &$\zz_2^2$& $\zz_3^2$&$r1$\\
[.1cm] \mytableextraspace
$\tfrac{\SO(4+q)}{\SO(4)\times \SO(q)}$, $q\geq 11 $& $0$ & $\zz_2$ & $0$ & $\zz^2$ &$\zz_2^2$&$\zz_2^2 $ &$\zz_{12}^2$ &$\zz_2^2$& $\zz_3^2$&$\zz_3^2$\\
[.1cm] \mytableextraspace
$\tfrac{\SO(10)}{\SO(5)\times \SO(5)}$& $0$ & $\zz_2$ & $0$ & $\zz$ &$\zz_2^2$ &$\zz_2^2$ &$f$ &$r1$&$\zz\oplus\zz_{2}$&$f$\\
[.1cm] \mytableextraspace
$\tfrac{\SO(11)}{\SO(5)\times \SO(6)}$& $0$ & $\zz_2$ & $0$ & $\zz$ &$\zz_2$ & $\zz_2\oplus \zz$ & $f$&$r1$&$\zz_{12}$ &$f$\\
[.1cm] \mytableextraspace
$\tfrac{\SO(12)}{\SO(5)\times \SO(7)}$& $0$ & $\zz_2$ & $0$ & $\zz$ &$\zz_2$ & $\zz_2$ &$f$ &$\zz$&$\zz_2$&$f$ \\
[.1cm] \mytableextraspace
$\tfrac{\SO(13)}{\SO(5)\times \SO(8)}$& $0$ & $\zz_2$ & $0$ & $\zz$ &$\zz_2$ & $\zz_2$ &$0$ &$\zz^2$&$\zz_2^2$&$f$ \\
[.1cm] \mytableextraspace
$\tfrac{\SO(14)}{\SO(5)\times \SO(9)}$& $0$ & $\zz_2$ & $0$ & $\zz$ &$\zz_2$ & $\zz_2$ &$0$ &$\zz$&$\zz_2$&$f$ \\
[.1cm] \mytableextraspace
$\tfrac{\SO(15)}{\SO(5)\times \SO(10)}$& $0$ & $\zz_2$ & $0$ & $\zz$ &$\zz_2$ & $\zz_2$ &$0$ &$\zz$&$0$&$r1$\\
[.1cm] \mytableextraspace
$\tfrac{\SO(5+q)}{\SO(5)\times \SO(q)}$, $q\geq 11 $& $0$ & $\zz_2$ & $0$ & $\zz$ &$\zz_2$ & $\zz_2$ &$0$ &$\zz$&$0$&$0$\\
[.1cm] \mytableextraspace
$\tfrac{\SO(12)}{\SO(6)\times \SO(6)}$& $0$ & $\zz_2$ & $0$ & $\zz$ &$0$ & $\zz^2$ & $f$&$r1$&$f$&$f$ \\
[.1cm] \mytableextraspace
$\tfrac{\SO(13)}{\SO(6)\times \SO(7)}$& $0$ & $\zz_2$ & $0$ & $\zz$ &$0$ & $\zz$ &$f$ &$\zz$&$f$&$f$ \\
[.1cm] \mytableextraspace
$\tfrac{\SO(14)}{\SO(6)\times \SO(8)}$& $0$ & $\zz_2$ & $0$ & $\zz$ &$0$ & $\zz$ & $0$ &$\zz^2$&$f$ &$f$\\
[.1cm] \mytableextraspace
$\tfrac{\SO(15)}{\SO(6)\times \SO(9)}$& $0$ & $\zz_2$ & $0$ & $\zz$ &$0$ & $\zz$ & $0$ &$\zz$&$f$ &$f$\\
[.1cm] \mytableextraspace
$\tfrac{\SO(16)}{\SO(6)\times \SO(10)}$& $0$ & $\zz_2$ & $0$ & $\zz$ &$0$ & $\zz$ & $0$ &$\zz$&$\zz_{24}$&$r1$\\
[.1cm] \mytableextraspace
$\tfrac{\SO(6+q)}{\SO(6)\times \SO(q)}$, $q\geq 11 $& $0$ & $\zz_2$ & $0$ & $\zz$ &$0$ & $\zz$ & $0$ &$\zz$&$\zz_{24}$&$\zz_{2}$\\
[.1cm] \mytableextraspace
$\tfrac{\SO(14)}{\SO(7)\times \SO(7)}$& $0$ & $\zz_2$ & $0$ & $\zz$ &$0$ & $0$ &$f$ &$\zz$&$f$&$f$ \\
[.1cm] \mytableextraspace
$\tfrac{\SO(15)}{\SO(7)\times \SO(8)}$& $0$ & $\zz_2$ & $0$ & $\zz$ &$0$ & $0$ &$0$ &$\zz^2$&$f$&$f$ \\
[.1cm] \mytableextraspace
$\tfrac{\SO(16)}{\SO(7)\times \SO(9)}$& $0$ & $\zz_2$ & $0$ & $\zz$ &$0$ & $0$ &$0$& $\zz$&$f$&$f$ \\
[.1cm] \mytableextraspace
$\tfrac{\SO(17)}{\SO(7)\times \SO(10)}$& $0$ & $\zz_2$ & $0$ & $\zz$ &$0$ & $0$ &$0$& $\zz$& $\zz_2^2$&$r1$\\
[.1cm] \mytableextraspace
$\tfrac{\SO(7+q)}{\SO(7)\times \SO(q)}$, $q\geq 11 $& $0$ & $\zz_2$ & $0$ & $\zz$ &$0$ & $0$ &$0$& $\zz$& $\zz_2^2$&$\zz_2^2$\\
[.1cm] \mytableextraspace
$\tfrac{\SO(16)}{\SO(8)\times \SO(8)}$& $0$ & $\zz_2$ & $0$ & $\zz$ &$0$ & $0$ &$0$ &$\zz^3$&$f$ &$f$\\
[.1cm] \mytableextraspace
$\tfrac{\SO(17)}{\SO(8)\times \SO(9)}$& $0$ & $\zz_2$ & $0$ & $\zz$ &$0$ & $0$ &$0$& $\zz$&$f$&$f$ \\
[.1cm] \mytableextraspace
$\tfrac{\SO(18)}{\SO(8)\times \SO(10)}$& $0$ & $\zz_2$ & $0$ & $\zz$ &$0$ & $0$ &$0$& $\zz$& $\zz_2^3$&$r1$\\
[.1cm] \mytableextraspace
$\tfrac{\SO(8+q)}{\SO(8)\times \SO(q)}$, $q\geq 10 $& $0$ & $\zz_2$ & $0$ & $\zz$ &$0$ & $0$ &$0$& $\zz$& $\zz_2^3$&$\zz_2^3$\\
[.1cm] \mytableextraspace
$\tfrac{\SO(18)}{\SO(9)\times \SO(9)}$& $0$ & $\zz_2$ & $0$ & $\zz$ &$0$ & $0$ &$0$& $\zz$&$f$&$f$ \\
[.1cm] \mytableextraspace
$\tfrac{\SO(19)}{\SO(9)\times \SO(10)}$& $0$ & $\zz_2$ & $0$ & $\zz$ &$0$ & $0$ &$0$& $\zz$& $\zz_2^2$&$r1$\\
[.1cm] \mytableextraspace
$\tfrac{\SO(9+q)}{\SO(9)\times \SO(q)}$, $q\geq 11 $& $0$ & $\zz_2$ & $0$ & $\zz$ &$0$ & $0$ &$0$& $\zz$& $\zz_2^2$&$\zz_2^2$\\
[.1cm] \mytableextraspace
$\tfrac{\SO(20)}{\SO(10)\times \SO(10)}$& $0$ & $\zz_2$&$0$ & $\zz$ & $0$ &$0$ &$0$ &$\zz$ &$\zz_2$ &$\zz^{2}\oplus\zz_{2}$\\
[.1cm] \mytableextraspace
$\tfrac{\SO(10+q)}{\SO(10)\times \SO(q)}$, $q\geq 11$& $0$ & $\zz_2$&$0$ & $\zz$ & $0$ &$0$ &$0$ &$\zz$ &$\zz_2$ &$\zz_{2}$
\end{longtable}
\endgroup

}
\clearpage


\section{Proof of the main Theorems}\label{S:Proof of main Theorems}
In this section we prove our main theorems. To begin, let $P^{d}$ be an irreducible simply-connected compact classical symmetric space of dimension $d$ and let $k_{P}$ and $ C_{P}$ be as in Table \ref{TAB: type I short}.

First we need the following lemma.

\begin{lemma}\label{L:Focal_Radius}
Let $P$ be an irreducible simply-connected symmetric space of
compact type with $\Ric_{k_{P}}\geq k_{P}$ and let $Q$ be any submanifold of $P$ with $\codim Q\leq C_{P}$. Then $\foc_{Q}\leq \pi/2$.
\end{lemma}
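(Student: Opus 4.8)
The plan is to read the statement off Proposition~\ref{P:Focal_Radius} directly; the only hypothesis of that proposition that is not already part of the present setup is the dimension bound $\dim Q \geq k_P$, so essentially all that has to be done is to check this inequality.

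First I would record the numerology relating $C_P$ to the other structural quantities. By definition $d_P = \dim P - k_P$, and from the discussion preceding Table~\ref{TAB: type I short} (and, for rank one, from the conventions recalled there, where $k_P = 1$, $C_P = (n-9)/2$ and $d_P = n-1$) one has $C_P = d_P/2 - 4$. Consequently
$$\dim P - C_P = (k_P + d_P) - \big(\tfrac{d_P}{2} - 4\big) = k_P + \tfrac{d_P}{2} + 4.$$

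Next, since $Q$ is a submanifold of $P$ with $\codim Q \leq C_P$, I would estimate
$$\dim Q = \dim P - \codim Q \;\geq\; \dim P - C_P \;=\; k_P + \tfrac{d_P}{2} + 4 \;>\; k_P,$$
using only that $d_P \geq 0$ (in fact, since we work with valid dimensions in the sense of Remark~\ref{R:Valid_dim}, the requirement $C_P \geq 1$ forces $d_P \geq 10$, so $\dim Q$ comfortably exceeds $k_P$). In particular $\dim Q \geq k_P$. Finally I would apply Proposition~\ref{P:Focal_Radius} with $M = P$ — which is complete, being compact — and $k = k_P$, so that the standing assumption $\Ric_{k_P} \geq k_P$ is exactly the curvature hypothesis demanded there, and $N = Q$, whose dimension is at least $k_P$; this yields $\foc_Q \leq \pi/2$, as claimed.

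There is no real obstacle here: the lemma is a bookkeeping consequence of the definition of $C_P$ together with the cited focal-radius estimate. The single point one must not overlook is that the condition $\codim Q \leq C_P$ is vacuous unless $\dim P$ is a valid dimension — which is precisely the regime in which $\dim Q$ automatically dominates $k_P$, so that Proposition~\ref{P:Focal_Radius} is applicable.
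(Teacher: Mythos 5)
Your proposal is correct and follows exactly the paper's route: the paper's proof is precisely the ``direct computation'' $\dim Q \geq \dim P - C_P \geq k_P$ (which you spell out via $C_P = d_P/2 - 4$ and $d_P = \dim P - k_P$) followed by an application of Proposition~\ref{P:Focal_Radius} with $M = P$, $N = Q$ and $k = k_P$. No gaps; your extra remark about valid dimensions is consistent with Remark~\ref{R:Valid_dim} but not needed, since $d_P \geq 0$ already suffices.
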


\begin{proof}
A direct computation shows that $\dim Q\geq \dim P-C_{P}\geq k_{P}$. Then by Proposition \ref{P:Focal_Radius}, $\foc_{Q}\leq \pi/2$.
\end{proof}

\begin{proof}[Proof of Theorem~\ref{Main_submanifolds_shape_Operator}]
Let $Q^{l}$ be an $l$-dimensional submanifold of $P^{d}$ satisfying the conditions in the statement of the theorem.
By rescaling the metric with the scalar factor $\lambda=\frac{\delta}{k_{P}}$, we get that $\Ric_{k_{P}}\geq k_{P}$. Let us denote the geometric notions corresponding to the rescaled metric with the superscript $\lambda$. Then $Q$ satisfies the following conditions:
 \begin{itemize}
 \item
$\foc^{\lambda}_{Q}=\sqrt {\lambda}\,\foc_{Q}>\sqrt{\lambda}r\geq 0$. Since $\codim Q\leq C_{P}$, by Lemma~\ref{L:Focal_Radius}, $\foc^{\lambda}_{Q}\leq \pi/2$. Hence $\sqrt{\lambda}r\in[0, \pi/2)$.
\item For every unit normal vector $v$ (with respect to the rescaled metric), and every $k_{P}$-dimensional subspace $W$ of $TQ$, we have that
\begin{align*}
\lvert \trace(S^{\lambda}_{v}\mid_{ W})\rvert &=\frac{1}{\sqrt {\lambda}} \lvert \trace(S_{\sqrt{\lambda}v}\mid_{W})\rvert \\
&\leq \frac{1}{\sqrt {\lambda}}\, {\sqrt {\lambda}} \, k_{P} \cot (\pi/2-\sqrt{\lambda}r)\\
&=k_{P}\cot (\pi/2-\sqrt{\lambda}r).
\end{align*}
 \end{itemize}

Therefore, by Theorem~\ref{T_Connectivity_Principle}, the inclusion map $Q\hto{} P$ is $(2l-d-k_{P}+2)$-connected, or equivalently $\sharp_P(Q)$-connected, where $\sharp_P(Q)$ is the quantity in Table~\ref{TAB: type I short}.

By assumption $\Codim~Q=d-l\leq C_{P}$ which implies that
\begin{equation}\label{EQ:CP}
\sharp_P(Q)=2+d_{P}-2\cdot\Codim(Q)\geq 2+d_{P}-2C_{P}.
\end{equation}
Now our proof has two steps:

\step{1}
 First, for each symmetric space $P$, we apply the corresponding quantities $d_{P}$ and $C_{P}$ from Table~\ref{TAB: type I short} and we conclude that the inclusion map $Q\hto{} P$ is $10$-connected in any case. Therefore, $P$ and $Q$ have isomorphic homotopy groups up to degree at least $9$.
This, in particular, implies that $Q$ is simply-connected and, since, by assumption, $Q$ is isomorphic to a symmetric space of compact type, we deduce that $Q$ is isomorphic to a Riemannian product

\step{2} In the second step, first we analyze the homotopy groups of simply-connected irreducible symmetric spaces of compact type from Tables~\ref{mastertable_S}, \ref{mastertable_SH}, \ref{mastertable_E}, \ref{mastertable_USH}, and \ref{mastertable_RG}. These tables reveal the following information:
\begin{itemize}
\item[(i)] Among simply-connected irreducible symmetric spaces of compact type, only spheres of dimensions at least $10$ have trivial homotopy groups up to degree $9$. This shows that
only
 products with spheres up to dimensions at least $10$ do not change the first $9$ homotopy groups of the symmetric spaces in question.
\item[(ii)] $\pi_{j}(\tfrac{\SO(2+q)}{\SO(2)\SO(q)})=\pi_{j}(\mathbb{CP}^{n})$, for $0\leq j\leq 9$, $q\geq 10$, and $n\geq 5$. This shows that one cannot discern the real Grassmannian $\tfrac{\SO(2+q)}{\SO(2)\SO(q)}$, $q\geq 10$, and complex projective spaces $\mathbb{CP}^{n}$, $n\geq 5$, by their first $9$ homotopy groups.
\item[(iii)] For other irreducible symmetric spaces, if they have the same homotopy groups up to degree $9$, then they have the same Cartan type. In particular, if a reducible symmetric space $X_{1}\times\ldots \times X_{r}$ has the same homotopy groups up to degree $9$ as an irreducible symmetric space $P$, then there is exactly one factor, say, without restriction, $X_{1}$, which has the same Cartan type as $P$. The other factors $X_{\geq 2}$ then necessarily have trivial homotopy groups up to degree $9$, and, consequently, are isomorphic to spheres by Part (i).
\end{itemize}

From this information, one can now prove Parts 1, 2, and 3 of the theorem. For Part 4, note that if $Q\approx X$, where $X$ is an irreducible symmetric space of compact type, then since $X$ and $P$ have the same Cartan type, a dimension computation in each case shows that $\codim~Q>C_{P}$. This implies that $X$ must necessarily be reducible.
\end{proof}

The reader unwilling to check the different cases in the proofs by hand may successively formulate these comparisons of homotopy groups in the form of systems of linear equations. Indeed, successively picking different coefficients, starting with rational ones, then using $\zz_2$-coefficients, $\zz_3$-coefficients, etc., one uses the additivity of homotopy groups in direct products in order to produce separate systems of linear equations: the variables correspond to the number of Euclidean factors of a fixed type, their coefficient is the rank (over the corresponding ring) of the homotopy group in the considered degree of the respective type, the result equals the rank of the homotopy group in this degree of the ambient space, the number of equations is $9$, one equation for each potentially non-trivial homotopy group. One after the other, these systems can efficiently be solved by computer algebra programmes like \textsf{Mathematica}---as confirmed by the authors.

We remark further that this procedure then as well is astonishingly efficient in order to deal with non-irreducible ambient spaces. That is, several recognition theorems of the form of Corollary \ref{cor01} may easily be extended to and formulated for ambient spaces being direct products to a reasonable extent. We leave a detailed investigation of this fact to the interested reader.

\begin{proof}[Proof of Corollary \ref{cor01}]
\label{R:proof_cor}
One can easily deduce Corollary~\ref{cor01} from Step 2 of the proof of Theorem~\ref{Main_submanifolds_shape_Operator}.
 \end{proof}
\vspace{3mm}
 Now we prove Theorem~\ref{Main_T_g_Range}.

\begin{proof}[Proof of Theorem~\ref{Main_T_g_Range}]
Since $Q$ is a complete totally geodesic embedded submanifold of $P$, it satisfies the hypotheses of Theorem~\ref{Main_submanifolds_shape_Operator}. Therefore, by Part (3) of the theorem, $Q$ has the same Cartan type as $P$, possibly up to products with spheres of dimensions at least $10$. Now we show that $Q$ cannot have a sphere factor, and, as a result, $Q$ has the same Cartan type as $P$.

By contradiction, assume that
\begin{equation}\label{Eq_Decomposition_Q}
Q=Q_{1}\times S^{l_{1}}\times \ldots \times S^{l_{t}},
\end{equation}
 with $t\geq 1$, $l_{i}\geq 10$, $i=1, \ldots, t$, where $Q_{1}$ has the same Cartan type as $P$. Then by \cite[Corollary~6.6]{ChenNagano}, we have that $(x, Q) $ is a polar-meridian pair for $Q$, for $x\in Q$. By Lemma~\ref{L_Inclusion_Pairs}, there exists a polar-meridian pair $(P_+, P_-)$ for $P$ such that $Q$ is a subspace of $P_-$. Now we divide our proof into three cases according to whether $P$ is a complex, quaternionic or real Grassmannian.

First assume that $P$ is a complex Grassmannian manifold $\Gr_{\mathbb{C}}(p, n)$, where $p+q=n$. Then by the classification of polar-meridian pairs for compact irreducible symmetric spaces (e.g.~\cite[pp. 41, 42]{Sumi}), and, since $p<q$, there exist $a, b$, with $a+b=p$, and $0\leq a< p$, such that
\[
Q\subseteq \Gr_{\mathbb{C}}(a, n-2b)\times \Gr_{\mathbb{C}}(b, 2b)=P_-.
\]
  This implies that $$\codim~P_-\leq \codim~Q\leq C_{P}=p+q-11/2.$$
Since $P_{-}$ is also a complete totally geodesic embedded submanifold of $P$, by Theorem~\ref{Main_submanifolds_shape_Operator}, possibly up to a product with spheres, $P_{-}$ has the same Cartan type as $\Gr_{\mathbb{C}}(p, n)$.
This can only occur if $a=0$ and $b=p$, but in this case, $\codim~P_{-}=2pq-2p^{2}$, which is greater than $p+q-11/2$
since $q\geq p+1$. Hence we get a contradiction.

  Now assume that $P$ is a quaternionic Grassmannian manifold $\Gr_{\mathbb{H}}(p, n)$, where $p+q=n$. Similarly, there exist $a, b$, with $a+b=p$, and $0\leq a< p$, such that
  \[
Q\subseteq \Gr_{\mathbb{H}}(a, n-2b)\times \Gr_{\mathbb{H}}(b, 2b)=P_-.
\]
  Again, this implies that
$$\codim~N\leq \codim~Q\leq C_{P}=2(p+q)-13/2,$$
which yields a contradiction as above.

For the oriented real Grassmannian $P=\Gr_{\mathbb{R}}(p, n)$, note that P has a polar-meridian pair of the form $(z, P)$ (see for example \cite[Proposition~1.9]{Nagano}), for some $z\in P$.
Therefore, one cannot immediately argue as before and hence needs to find a suitable polar-meridian pair. To this end, we show that $Q$ has a pole which is not mapped to a pole in $P$. Let $o$ be a base point for both $Q$ and $P$.
Since the oriented Grassmannian $Q_{1}$ in \eqref{Eq_Decomposition_Q} has a pole, by \cite[Proposition~6.5]{ChenNagano}, $Q$ has at least two poles
  namely say $p_{1}$ and $p_{2}$. Then by Lemma~\ref{L_Inclusion_Pairs} $\{p_{1}\}=Q^{o}_{+}(p_{1})\subseteq P^{o}_{+}(p_{1})$ and $\{p_{2}\}=Q^{o}_{+}(p_{2})\subseteq P^{o}_{+}(p_{2})$.
 Since $P$ has a unique pole, we conclude that at least one of the polars $P^{o}_{+}(p_{1})$ or $P^{o}_{+}(p_{2})$ is not a pole.
Assume that $P^{o}_{+}(p_{1})$ is not a pole.
By Lemma~\ref{L_Inclusion_Pairs} we have that $Q=Q^{o}_{-}(p_{1})\subseteq P^{o}_{-}(p_{1})\neq P$, where the last inequality follows from the assumption that $P^{o}_{+}(p_{1})$ is not a pole. Now we can proceed as for the complex and quaternionic Grassmannian and conclude that $Q$ has the same Cartan type as $P$.
\end{proof}



\def\cprime{$'$}


\pagebreak

\

\vfill

\begin{center}
\noindent
\begin{minipage}{\linewidth}
\small \noindent \textsc
{Manuel Amann} \\
\textsc{Institut f\"ur Mathematik}\\
\textsc{Differentialgeometrie}\\
\textsc{Universit\"at Augsburg}\\
\textsc{Universit\"atsstra\ss{}e 14 }\\
\textsc{86159 Augsburg}\\
\textsc{Germany}\\
[1ex]
\footnotesize
\textsf{manuel.amann@math.uni-augsburg.de}\\
\textsf{www.uni-augsburg.de/de/fakultaet/mntf/math/prof/diff/team/dr-habil-manuel-amann/}
\end{minipage}
\end{center}

\vspace{1cm}

\begin{center}
\noindent
\begin{minipage}{\linewidth}
\small \noindent \textsc
{Peter Quast} \\
\textsc{Institut f\"ur Mathematik}\\
\textsc{Differentialgeometrie}\\
\textsc{Universit\"at Augsburg}\\
\textsc{Universit\"atsstra\ss{}e 14 }\\
\textsc{86159 Augsburg}\\
\textsc{Germany}\\
[1ex]
\footnotesize
\textsf{peter.quast@math.uni-augsburg.de}\\
\end{minipage}
\end{center}

\vspace{1cm}

\begin{center}
\noindent
\begin{minipage}{\linewidth}
\small \noindent \textsc
{Masoumeh Zarei} \\
\textsc{Mathematisches Institut}\\
\textsc{Universität Münster}\\
\textsc{Einsteinstr. 62}\\
\textsc{48149 Münster }\\
\textsc{Germany}\\
[1ex]
\footnotesize
\textsf{masoumeh.zarei@uni-muenster.de}\\
\end{minipage}
\end{center}
\end{document}